\def\ord{{\rm ord}}
\def\SP{{\rm SP}}
\def\11{{\mathbf 1}}
\def\CC{{\mathbf C}}
\def\FF{{\mathbf F}}
\def\NN{{\mathbf N}}
\def\QQ{{\mathbf Q}}
\def\RR{{\mathbf R}}
\def\ZZ{{\mathbf Z}}
\def\cM{{\mathcal M}}
\def\cO{{\mathcal O}}
\def\lla{\mathopen{\{\!\{}}
\def\rra{\mathopen{\}\!\}}}
\mathchardef\alphag="7C0B \mathchardef\betag="7C0C
\mathchardef\gammag="7C0D \mathchardef\deltag="7C0E
\mathchardef\varepsilong="7C22 \mathchardef\varphig="7C27
\mathchardef\psig="7C20 \mathchardef\zetag="7C10
\mathchardef\epsilong="7C0F \mathchardef\rhog="7C1A
\mathchardef\taug="7C1C \mathchardef\upsilong="7C1D
\mathchardef\iotag="7C13 \mathchardef\thetag="7C12
\mathchardef\pig="7C19 \mathchardef\sigmag="7C1B
\mathchardef\etag="7C11 \mathchardef\omegag="7C21
\mathchardef\kappag="7C14 \mathchardef\lambdag="7C15
\mathchardef\mug="7C16 \mathchardef\xig="7C18
\mathchardef\chig="7C1F \mathchardef\nug="7C17
\mathchardef\varthetag="7C23 \mathchardef\varpig="7C24
\mathchardef\varrhog="7C25 \mathchardef\varsigmag="7C26
\mathchardef\Omegag="7C0A \mathchardef\Thetag="7C02
\mathchardef\Sigmag="7C06 \mathchardef\Deltag="7C01
\mathchardef\Phig="7C08 \mathchardef\Gammag="7C00
\mathchardef\Psig="7C09 \mathchardef\Lambdag="7C03
\mathchardef\Xig="7C04 \mathchardef\Pig="7C05
\mathchardef\Upsilong="7C07
\newtheorem{theorem}[subsection]{Theorem}
\newtheorem{lem}[subsection]{Lemma}
\newtheorem{cor}[subsection]{Corollary}
\newtheorem{prop}[subsection]{Proposition}
\theoremstyle{definition}
\newtheorem{definition}[subsection]{Definition}
\newtheorem{example}[subsection]{Example}
\newtheorem{def-prop}[subsection]{Proposition-Definition}
\newtheorem{def-theorem}[subsection]{Theorem-Definition}
\newtheorem{def-lem}[subsection]{Lemma-Definition}
\theoremstyle{remark}
\newtheorem{remark}[subsection]{Remark}
\theoremstyle{plain}
\numberwithin{equation}{subsection}
\def\boxit#1#2{\setbox1=\hbox{\kern#1{#2}\kern#1}%
\dimen1=\ht1 \advance\dimen1 by #1 \dimen2=\dp1 \advance\dimen2 by
#1
\setbox1=\hbox{\vrule height\dimen1 depth\dimen2\box1\vrule}%
\setbox1=\vbox{\hrule\box1\hrule}%
\advance\dimen1 by .4pt \ht1=\dimen1 \advance\dimen2 by .4pt
\dp1=\dimen2 \box1\relax}
\def\CC{{\mathbf C}}
\def\FF{{\mathbf F}}
\def\NN{{\mathbf N}}
\def\QQ{{\mathbf Q}}
\def\RR{{\mathbf R}}
\def\ZZ{{\mathbf Z}}
\def\cM{{\mathcal M}}
\def\cO{{\mathcal O}}
\mathchardef\alphag="7C0B \mathchardef\betag="7C0C
\mathchardef\gammag="7C0D \mathchardef\deltag="7C0E
\mathchardef\varepsilong="7C22 \mathchardef\varphig="7C27
\mathchardef\psig="7C20 \mathchardef\zetag="7C10
\mathchardef\epsilong="7C0F \mathchardef\rhog="7C1A
\mathchardef\taug="7C1C \mathchardef\upsilong="7C1D
\mathchardef\iotag="7C13 \mathchardef\thetag="7C12
\mathchardef\pig="7C19 \mathchardef\sigmag="7C1B
\mathchardef\etag="7C11 \mathchardef\omegag="7C21
\mathchardef\kappag="7C14 \mathchardef\lambdag="7C15
\mathchardef\mug="7C16 \mathchardef\xig="7C18
\mathchardef\chig="7C1F \mathchardef\nug="7C17
\mathchardef\varthetag="7C23 \mathchardef\varpig="7C24
\mathchardef\varrhog="7C25 \mathchardef\varsigmag="7C26
\mathchardef\Omegag="7C0A \mathchardef\Thetag="7C02
\mathchardef\Sigmag="7C06 \mathchardef\Deltag="7C01
\mathchardef\Phig="7C08 \mathchardef\Gammag="7C00
\mathchardef\Psig="7C09 \mathchardef\Lambdag="7C03
\mathchardef\Xig="7C04 \mathchardef\Pig="7C05
\mathchardef\Upsilong="7C07
\def\ord{{\rm ord}}
\begin{document}

\author{Raf Cluckers}
\address{Universit\'e Lille 1, Laboratoire Painlev\'e, CNRS - UMR 8524, Cit\'e Scientifique, 59655
Villeneuve d'Ascq C\'edex, France, and,
Katholieke Universiteit Leuven, Department of Mathematics,
Celestijnenlaan 200B, B-3001 Leu\-ven, Bel\-gium\\  }
\email{raf.cluckers@wis.kuleuven.be}
\urladdr{http://www.wis.kuleuven.be/algebra/Raf/}



\title[$p$-adic van der Corput Lemma]{Analytic van der Corput Lemma for $p$-adic and $\FF_q((t))$ oscillatory integrals, singular {F}ourier transforms, and restriction theorems}

\begin{abstract}
We give the $p$-adic and $\FF_q((t))$ analogue of the real van der Corput Lemma, where the real condition of sufficient smoothness for the phase is replaced by the condition that the phase is a convergent power series.
This van der Corput style result allows us, in analogy to the real situation, to study singular Fourier transforms on suitably curved (analytic) manifolds and opens the way for further applications. As one such further application we give the restriction theorem for Fourier transforms of $L^p$ functions to suitably curved analytic manifolds over non-archimedean local fields, similar to the real restriction result by E.~Stein and C.~Fefferman.
\end{abstract}

\keywords{van der Corput Lemma, $p$-adic oscillatory integrals of the first kind, singular Fourier transforms, $p$-adic restriction theorems, $p$-adic analytic manifolds of finite type}

\maketitle

\section{Introduction}

An important part of the theory of harmonic analysis on abelian locally compact groups can be developed in parallel for all these groups. Great effort has gone in making many results in harmonic analysis, often first discovered on real affine spaces and on the circle group, as uniform as possible in the abelian locally compact group,   starting around the study by A.~Weil \cite{Weilgroup}. The additive groups of local fields (be it archimedean or non-archimedean), together with affine spaces over these fields, are usually the base cases for which results and proofs should be more or less uniform in the field. Some branches of the theory seem to be tight up more closely with the real and complex numbers, like for example Hardy spaces or, more classically, harmonic functions. In E.~Stein's book \cite{Stein}, although the set-up is related to real-variable methods, many results translate with some care to results over $p$-adic fields, in particular, several results about maximal functions that are related to the doubling nature of the measure, see
\cite[Chapters I and II]{Stein}, translate often directly.
 However, real results by van der Corput going back as far as 1921 (see \cite{vdCorput} or \cite{Stein}, Chapter VIII, Proposition 2) on one-dimensional one-parameter oscillatory real integrals were not known to have a counterpart over non-archimedean local fields, and many results of \cite{Stein}, from chapter VIII on,
 are based on this van der Corput Lemma, which has moreover been applied in a wide variety of (real) settings. In this paper we give the non-archimedean analogue of the van der Corput Lemma.
One has to note however that a literal analogue of the van der Corput Lemma to non-archimedean local fields is false because of several reasons: firstly, the constants are not absolute but depend in particular on the local field; secondly and more fundamentally, the condition of being $C^k$ for the phase is much too general a condition for a function from a non-archimedean local field $K$ to $K$ to be able to derive anything nontrivial (by the total disconnectedness).
Let us recall van der Corput's Lemma on real oscillatory integrals of \cite{vdCorput} in the form of \cite{Stein}, Chapter VIII, Proposition 2, where $f$ is a real-valued $C^k$ function on an open interval $(a,b)$ such that $|f^{(k)}(x)|\geq 1$ for some $k\geq 1$ and all $x$ in $(a,b)$. If either $k\geq 2$ or $k=1$ and $f'$ is monotonic, then
$$
R(y):=\int_a^b \exp( 2\pi i y\cdot f(x)) dx
$$
satisfies
$$
|R(y)|\leq c_k |y|^{-1/k}\quad \mbox{ for all nonzero } y,
$$
where $c_k$ is a constant only depending on $k$ (and thus not on $a,b,y$, nor on $f$).

In Proposition \ref{osc(k)} below we give the $p$-adic and $\FF_q((t))$ analogue for analytic phase $f$
of this van der Corput Lemma, allowing us to develop the theory further in great analogy to Chapter VIII of \cite{Stein}. In particular we are able to study the relation between $K$-analytic manifolds with suitable curvature (namely manifolds of finite type) and Fourier transforms, see Theorem \ref{thm:Fourier:finitetype} and the Restriction Theorem \ref{thm:restr}. Note that the constants $c_k$ that we will get for non-archimedean $K$ will depend on the field $K$ as well as on the Gauss norm of the analytic phase. Luckily enough such Gauss norms are bounded in many natural situations so that we will encounter no difficulty in proceeding to the study of Fourier transforms in higher dimensions.

In essense,  van der Corput's Lemma on the reals is based on the fundamental theorem of calculus relating integrals with derivatives, namely on its basic corollary that for a real $C^1$ function
$\phi:\RR\to\RR$, if $\phi(c)=0$ and $|\phi'(x)|\geq \varepsilon>0$
on $\RR$, then $|\phi(x+c)|\geq \varepsilon |x|$ for all $x\in\RR$. Such a fundamental theorem does not have an analogue over non-archimedean local fields, but if $\phi$ is the identity function $x\mapsto x$, its corollary trivially holds over $K$, and one might try to apply a change of variables to reduce to the identity function in general. However, $C^1$-functions $K\to K$ do not allow an analogue of the real implicit function theorem, so such functions seem hopeless. We resolve this problem by requiring that the phase $f$ of the oscillatory integral be $K$-analytic, and such functions clearly allow implicit function theorems.

In Stein's version of the proof of van der Corput's Lemma one divides the interval $(a,b)$ into at most three sub-intervals: a small, bad interval where a trivial bound is used, and the remaining two larger and nice intervals where one can use induction on $k$. Later on, the size  of the bad interval is optimized to find the desired bounds.
A difficulty in adapting Stein's version of the proof of van der Corput's Lemma is that, while cutting away one bad sub-interval of $(a,b)$ one is left with at most two remaining intervals in the real case, in the non-archimedean case if one cuts away a small (bad) ball out of a big ball, one is left with a possibly huge (but still finite) number of remaining sub-balls. Hence, one has to control not only what size of balls the induction hypothesis can be applied to, but also the number of balls in which one subdivides the bigger ball, before optimizing the size of the bad ball on which the trivial bound is used.

 \subsection*{}
 In the more special case that the phase $f$ is a polynomial over $K$, K.~Rogers \cite{Rogers} obtained a  $p$-adic analogue of a variant of the real van der Corput Lemma (see Corollary \ref{general:Rogers} below for a $p$-adic generalization of this variant), but  to develop the theory further as we do, one really seems to need results with analytic phase in the oscillating integrals. Indeed, the ability of having $K$-analytic charts on manifolds is much more flexible and general for applications than having a polynomial framework. While the real van der Corput Lemma is already quite old, the applications we give in Theorem \ref{thm:Fourier:finitetype} and Theorem \ref{thm:restr} are  non-archimedean analogues of much more recent real results, see \cite{Stein}, Chapter VIII. Note that in the real case K.~Rogers \cite{Rogerssharp} gives very good estimates for the constants $c_k$ for the above stated real van der Corput Lemma.

The study presented in this paper arose in the context of the study of groups with the Howe - Moore property  in \cite{CLTV}. Theorem \ref{thm:Fourier:finitetype} is used in \cite{CLTV} to give an alternative proof for the Howe  - Moore vanishing theorem in the $p$-adic case. We would like to thank warmly A.~Valette for
inviting us cordially to work on the question addressed in Theorem \ref{thm:Fourier:finitetype}. Further we thank K.~Rogers for inspiring us to study the relation of our results to his beautiful work in \cite{Rogers}, which led us to formulate Corollary \ref{general:Rogers}.

\section{Preliminaries}\label{sec:preli}

Write $K$ for a fixed non-archimedean local field and $\cO_K$ for its
valuation ring with maximal ideal $\cM_K$.  Let $q_K=p_K^{e_K}$ be
the number of elements of the residue field $\cO_K/\cM_K$, where
$p_K$ is a prime number and $e_K\geq 1$. Write $\pi_K$ for a uniformizer of $\cO_K$ and fix the norm $|\cdot|$ on
$K$ by assigning the value $q_K^{-1}$ to $\pi_K$, and write $\ord:K \to\ZZ \cup\{+\infty\}$ for the order which assigns the value $1$ to $\pi_K$ and sends $0$ to $+\infty$. For $x$ in
$K^n$, $|x|$ stands for $\max_{i=1}^n|x_i|$. Let $\psi$ be an
additive character on $K$ which is trivial on $\cM_K$ and nontrivial
on $\cO_K$.

\subsection{Convergent and special power series}

For $x$ a variable, resp.~a tuple of variables $(x_1,\ldots,x_n)$, write $K \lla x\rra$ for the collection of power series in $x$ over $K$ which converge on $\cO_K$, resp.~on $\cO_K^n$, that is, those
power series $\sum_{i\in\NN^n} a_ix^i\in K[[x]]$ satisfying that
$|a_i|$ goes to zero when $|i|:=i_1+\ldots,+i_n$ goes to infinity. Likewise, write
$\cO_K \lla x\rra$ for power series in $K \lla x\rra$ which also lie
in $\cO_K[[x]]$. For $f(x)\in K\lla x \rra $, write $\| f\| $ for the
Gauss norm of $f$, which is by definition $\sup_{i}|a_i|$. From now on untill Section \ref{sec:sev}, $x$ will always denote one variable. \\

The following definition of Special Power series, abbreviated by  SP, is a one-variable $p$-adic and $\FF_q((t))$ analogue of real $C^1$ functions $(a,b)\to \RR$ with big derivative on a real interval $(a,b)$.

\begin{definition}\label{defSP}
A power series $\sum_{i\geq
0} a_ix^i$ in one variable is called SP if it lies in $K\lla x\rra $,  $a_1\not=0$, and
  $a_j\in a_1\cM_K$ for all $j>1$. If $f$ is SP, write $|f|_{\SP}$ for $|a_1|$, which is nothing else than the Gauss norm of
  $f-f(0)$.
\end{definition}

Note that a convergent power series $f=\sum_{i\geq
0} a_ix^i$ is SP if and only if the higher order terms have small coefficients compared to the linear term in the sense that $|a_j|<|a_1|$ for each $j>1$. Therefore, $f$ can be approximated by $a_0+a_1x$ in the senses that  for all $x\in \cO_K$
$$
 |f(x)-a_0-a_1 x|< |f(x)|,
$$
and
 $$
 |f'(x)|=|a_1|.
$$

Although the definition of SP may seem very restrictive, the philosophy behind it is that power
series often become SP after basic manipulations like zooming in to good parts of the domain or taking derivatives. Lemma \ref{1SP} and Lemma \ref{d(d-1)} exhibit this kind of phenomena. Lemma \ref{SP2alt} describes the linear behavior of $|f(x)|$ in more detail.

\begin{definition}\label{defSPn}
Let $f(x)$ be in  $K\lla x\rra $. Define the SP-number of $f$ as the smallest integer $r\geq 0$ such that for all nonzero $c\in\cM_K^r$ and all $b\in \cO_K$, the power series
$$
f_{b,c}(t):=
\frac{1}{c}f(b+ct)
$$
is SP if such $r$ exists, and define the SP-number of $f$ as $+\infty$ otherwise.
\end{definition}

\begin{lem}\label{1SP}
Let $f(x)=\sum_{i\geq 0}a_i x^i$ be in $K \lla x\rra $. 
Suppose that  $|f'(x)|\geq 1$ for all $x$ in
$\cO_K$. Then the SP-number of $f$ is an integer $r$ satisfying
$$
q_K^{r-1}\leq \|f-f(0)\|.
$$
 Moreover, for all nonzero $c\in\cM_K^r$ and all $b\in \cO_K$, one has $|f_{b,c}|_{SP} \geq 1$.

\end{lem}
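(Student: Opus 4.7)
The strategy is to expand $f_{b,c}$ explicitly via Taylor's formula for $f$ at $b$, read off the SP condition on its coefficients, and extract the sharpest admissible lower bound on $\ord(c)$ (hence on the SP-number) in terms of $\|f-f(0)\|$. Throughout, write $f=\sum_{i\geq 0}a_ix^i$ and set $M:=\|f-f(0)\|=\max_{i\geq 1}|a_i|$.

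First, I would apply the convergent Taylor identity $f(b+ct)=\sum_{j\geq 0}\frac{f^{(j)}(b)}{j!}(ct)^j$ to obtain
$$
f_{b,c}(t)=\frac{f(b)}{c}+f'(b)\,t+\sum_{j\geq 2}\frac{f^{(j)}(b)}{j!}\,c^{j-1}\,t^j.
$$
The coefficient of $t$ is $f'(b)$, of norm $\geq 1$ by hypothesis, so $f_{b,c}$ is SP precisely when $\bigl|\frac{f^{(j)}(b)}{j!}\bigr|\,|c|^{j-1}<|f'(b)|$ for every $j\geq 2$. The key intermediate estimate is the uniform bound $\bigl|\frac{f^{(j)}(b)}{j!}\bigr|\leq M$, valid for all $j\geq 1$ and $b\in\cO_K$, which I would derive from the expansion $\frac{f^{(j)}(b)}{j!}=\sum_{i\geq j}\binom{i}{j}a_ib^{i-j}$ by the ultrametric inequality, using $|\binom{i}{j}|\leq 1$ (binomial coefficients are rational integers) and $|b|\leq 1$.

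Combined with $|f'(b)|\geq 1$ and $|c|\leq 1$, the SP condition is then implied by the single inequality $|c|M<1$, since for $j\geq 2$ the tightest case of $|c|^{j-1}M<|f'(b)|$ is $j=2$. The hypothesis $|f'(x)|\geq 1$, together with the analogous bound $|f'(x)|\leq M$ (from $|i a_i|\leq|a_i|$ and $|x|\leq 1$), forces $M\geq 1$, so $M=q_K^m$ for some integer $m\geq 0$. Taking $r'=m+1$, every nonzero $c\in\cM_K^{r'}$ satisfies $|c|\leq q_K^{-m-1}<1/M$, so the SP-number $r$ of $f$ is finite and at most $m+1$; this yields the announced inequality $q_K^{r-1}\leq q_K^m=M=\|f-f(0)\|$. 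The moreover part is then immediate: once $f_{b,c}$ is SP, the quantity $|f_{b,c}|_{\SP}$ is by definition the norm of its linear coefficient, namely $|f'(b)|\geq 1$.

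I do not anticipate a serious obstacle. The only non-routine ingredient is the ultrametric estimate $\bigl|\frac{f^{(j)}(b)}{j!}\bigr|\leq\|f-f(0)\|$, which rests only on integrality of binomial coefficients. The main pitfall is bookkeeping around strict versus non-strict inequalities in the definition of SP, and noticing that the hypothesis $|f'|\geq 1$ already forces $M\geq 1$, which is what guarantees that $r'=m+1$ is a nonnegative integer and that the stated inequality is meaningful.
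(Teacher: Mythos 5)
Your proof is correct and takes essentially the same route as the paper: Taylor-expand $f_{b,c}$ at $0$, observe via the chain rule that the linear coefficient is $f'(b)$ of norm $\geq 1$, and force the higher coefficients into $\cM_K$ by choosing $|c|<\|f-f(0)\|^{-1}$, which gives the bound $q_K^{r-1}\leq\|f-f(0)\|$. The only difference is that you spell out the ultrametric estimate $\bigl|f^{(j)}(b)/j!\bigr|\leq\|f-f(0)\|$ via integrality of binomial coefficients, a step the paper's proof leaves implicit in the phrase ``by the above choice of $c$''.
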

\begin{proof}
Note that $\|f-f(0)\|\geq 1$. If $f$ is already SP the statement is clear. Namely, $f$ is SP if and only if its SP-number is $0$. Now suppose that $f$ is not SP. For nonzero $c$ in $\cO_K$ such that $|c| < \|  f - f(0) \| ^{-1}$ and for $b\in\cO_K$,
expand the power series $f_{b,c}(t)$ in $t$ as
$$
f_{b,c}(t) = \sum_{j\geq 0} b_{j}t^j.
$$
The chain rule for differentiation implies that $|b_{1} | =  |(f_{b,c})'(0)| = |f'(b)| \geq 1$.
 On the other hand, each $b_{j}$ for $j>1$ lies in $\cM_K$ by the above choice of $c$. Concluding, $f_{b,c}(t)$ is SP and $|f_{b,c}|_{SP} = |b_{1} | \geq 1$.
\end{proof}

\begin{example}\label{ex:SPGauss}
Clearly a power series in $K\lla x\rra $ is SP if and only if it has SP-number $0$.
Let $f(x)$ be the polynomial $x+ 2^{-k}x^{2^{k+1}}$ for some integer $k\geq 0$ and suppose that $K=\QQ_2$, the field of $2$-adic numbers. Then $|f'(x)|=1 $ for all $x\in\ZZ_2$, and although the Gauss norm of $f-f(0)$ is big, the SP-number of $f$ is just $1$.


\end{example}


\begin{definition}\label{def:reg}

Call $f(x)$ in $\cO_K \lla x\rra $ (Weierstrass) regular of degree $d\geq 0$ if
$f(x)$ is congruent to a monic polynomial of degree $d$ modulo the
ideal $\pi_K\cdot \cO_K \lla x\rra$.

\end{definition}

It is clear that for any nonzero $f\in K\lla x\rra $, one has for a
unique $d\geq 0$ and a unique $c\in K^\times $ that $cf$ is regular
of degree $d$.

\begin{lem}\label{d(d-1)}
\label{somek} Let $f(x)\in K \lla x\rra $ be nonconstant. Let
$c$ be the unique element of $K^\times$ such that $c\cdot
(f(x)-f(0))$ is regular of degree $d\geq 1$.  If the characteristic of $K$ is zero or larger than $d$, then $f^{(d-1)}$ is SP and $|f^{(d-1)}|_{SP} = |d!|\cdot |c|^{-1}$. Moreover, the condition on the characteristic  of $K$ is necessary.
\end{lem}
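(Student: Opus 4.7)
The plan is to expand $f^{(d-1)}$ directly from the power-series of $g(x) := c(f(x)-f(0))$ and verify the SP conditions coefficient by coefficient. First I would write $g(x) = \sum_{i\ge 1} b_i x^i \in \cO_K\lla x\rra$; the regularity of degree $d$ translates into $|b_d|=1$ and $|b_i|\le q_K^{-1}$ for all $i>d$. Since $f-f(0) = c^{-1}g$, differentiating $d-1$ times gives
$$
f^{(d-1)}(x) \;=\; \sum_{j\ge 0} A_j\, x^j, \qquad A_j := c^{-1}\cdot\frac{(j+d-1)!}{j!}\, b_{j+d-1}.
$$
In particular $A_1 = c^{-1}\, d!\, b_d$ has absolute value $|d!|\cdot|c|^{-1}$, and the characteristic hypothesis guarantees $d!\neq 0$ in $K$, so $A_1\neq 0$.

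The core estimate is to show that $A_j\in A_1\cM_K$ for every $j\ge 2$. For this I would rewrite
$$
\frac{A_j}{A_1} \;=\; \frac{b_{j+d-1}}{b_d}\cdot\frac{(j+d-1)!}{j!\,d!} \;=\; \frac{b_{j+d-1}}{b_d}\cdot\frac{1}{d}\binom{j+d-1}{d-1}.
$$
The binomial coefficient is an integer, hence has norm at most $1$, and the characteristic hypothesis forces $|d|=1$, so the arithmetic factor has norm $\le 1$. Since $j+d-1>d$ for $j\ge 2$, the regularity of $g$ yields $|b_{j+d-1}/b_d|\le q_K^{-1}$, and thus $|A_j/A_1|\le q_K^{-1}<1$. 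This establishes that $f^{(d-1)}$ is SP with $|f^{(d-1)}|_{\SP}=|A_1|=|d!|\cdot|c|^{-1}$.

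For the necessity claim I would exhibit a failure when the characteristic hypothesis is violated: taking $K=\FF_p((t))$ and $f(x)=x^p$, one has $c=1$, $d=p$, and $f^{(p-1)}(x)=p!\cdot x=0$ in $K$, so the linear coefficient vanishes and $f^{(d-1)}$ cannot be SP. The main obstacle lies in the combinatorial estimate of the preceding paragraph: controlling the arithmetic factor $\binom{j+d-1}{d-1}/d$ in norm requires both $|d!|=1$ and $|d|=1$, which is precisely what the characteristic hypothesis provides, explaining why this condition is also necessary.
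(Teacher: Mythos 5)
Your argument follows the same coefficient-by-coefficient route as the paper: differentiate the power series $d-1$ times, identify the linear coefficient, and bound each higher coefficient against it by factoring out a regularity term (coming from $|b_{j+d-1}/b_d|\le q_K^{-1}$) and an arithmetic term. The gap is in your bound on the arithmetic term. You write it as $\frac{1}{d}\binom{j+d-1}{d-1}$ and assert that ``the characteristic hypothesis forces $|d|=1$''; this is false. The hypothesis controls $\mathrm{char}(K)$, not the residue characteristic $p_K$, and when $\mathrm{char}(K)=0$ with $p_K\mid d$ one has $|d|<1$, so the factor $1/d$ can have norm $>1$ and the estimate breaks.

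It is worth noting that the paper's own proof has the very same unjustified step: its claimed inequality $\bigl|\prod_{i=1}^{d-1}(j+d-i)\bigr|\le |d!|$ for all $j\ge 1$ is, after dividing by $|d!|$, precisely the statement $|\frac{1}{d}\binom{j+d-1}{d-1}|\le 1$, and it fails in the same cases. Concretely, over $K=\QQ_2$ with $d=j=2$ the product equals $3$ and $|3|_2=1>1/2=|2!|_2$; and indeed the lemma's conclusion itself fails there: $f(x)=x^2+2x^3$ has $c=1$, $d=2$, yet $f'(x)=2x+6x^2$ is not SP since $|6|_2=|2|_2$. To make the step valid one needs $p_K\nmid d$ (for instance $p_K>d$), which is strictly stronger than the stated condition on $\mathrm{char}(K)$. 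Your necessity example $f(x)=x^p$ over $\FF_p((t))$, which makes $f^{(p-1)}$ vanish identically, is correct and more explicit than the paper's brief remark; but the ``explanation'' in your last sentence repeats the erroneous claim that the characteristic hypothesis yields $|d|=1$, and should be revised accordingly.
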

\begin{proof}
Clearly the condition on the characteristic of $K$ is necessary. Now suppose that the characteristic of $K$ is zero or  $>d$ and write $f=\sum_{i\geq 0} a_ix^i$.
The coefficient of the linear term of $f^{(d-1)}$ equals $d! a_d$ with $c^{-1}=a_d$, and is thus nonzero. For any $j>1$, the $j$th coefficient of $f^{(d-1)}$ equals
$$
a_{j+d-1} \prod_{i=1}^{d-1} (j+d-i).
$$
Since for any $j\geq 1$ one has
$$
| \prod_{i=1}^{d-1} (j+d-i) | \leq  |d!|,
$$
and since $d$ equals the maximum of all integers $j$ such that $|a_j| = \|  f(x)-f(0) \| $,
it follows that $f^{(d-1)}$ is SP.

\end{proof}

The following lemma gives a link between $f$ being SP and a lower
bound for $|f(x)|$. 
It is the analogue of the fact that for a real $C^1$ function
$\phi:\RR\to\RR$, if $\phi(c)=0$ and $|\phi'(x)|\geq \varepsilon>0$
on $\RR$, then $|\phi(x+c)|\geq \varepsilon |x|$ for all $x\in\RR$.
In the real case this follows of course from the Fundamental Theorem
of integral calculus, but on $K$ one has to proceed
differently.

\begin{lem}\label{SP2alt}
 Let $f=\sum_{i\geq 0 }a_i x^i$ be in $K\lla
x\rra$. Suppose that $f$ is SP. If there exists $d\in \cO_K$ such
that $f(d)=0$, then
$$
|f (x) | = |f|_{\SP} \cdot | x - d |
$$
 for
all $x\in \cO_K$. If there exists no such $d$, then
$$
|f (x) | = |a_0| >  |f|_{\SP}
$$
for all $x\in \cO_K$. In general, if $e\in\cO_K$ is such that
$|f(e)|$ is minimal among the values $|f(x)|$ for $x$ in $\cO_K$, then one
has for all $x\in\cO_K$
$$
|f (x) | \geq  |f|_{\SP} \cdot | x - e |.
$$
\end{lem}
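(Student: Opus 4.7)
The plan is to reduce everything to one ultrametric identity, namely that for all $x, y \in \cO_K$
$$
|f(x) - f(y)| = |f|_{\SP} \cdot |x - y|.
$$
To prove this I factor $x^j - y^j = (x-y)(x^{j-1} + x^{j-2}y + \cdots + y^{j-1})$, so the second factor lies in $\cO_K$ and hence $|a_j(x^j - y^j)| \leq |a_j| \cdot |x-y|$ for each $j \geq 2$. The SP hypothesis $|a_j| < |a_1|$ makes the tail $\sum_{j\geq 2} a_j(x^j - y^j)$ strictly smaller in norm than $|a_1(x-y)|$, so the ultrametric triangle inequality forces equality with the linear contribution $|a_1|\cdot|x-y| = |f|_{\SP}\cdot|x-y|$.

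The first assertion is immediate by setting $y = d$. For the second assertion I would show that ``$f$ has no zero in $\cO_K$'' is equivalent to ``$|a_0| > |f|_{\SP}$'', and that in this case $|f(x)| = |a_0|$ on $\cO_K$. The easy direction comes from the key identity at $y = 0$: if $|a_0| > |f|_{\SP}$ then $|f(x) - a_0| = |f|_{\SP}\cdot|x| \leq |f|_{\SP} < |a_0|$, so $|f(x)| = |a_0|$ by ultrametricity and $f$ cannot vanish on $\cO_K$. The converse --- that $|a_0| \leq |f|_{\SP}$ forces a zero in $\cO_K$ --- is where the main work lies: I would verify that the map $T(x) = x - f(x)/a_1$ sends $\cO_K$ into itself (since $-a_0/a_1 \in \cO_K$ and $|g(x)/a_1| < 1$ where $g := f - a_0 - a_1 x$) and is a strict contraction with contraction constant $\max_{j \geq 2}|a_j|/|a_1| < 1$. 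Iterating $T$ from any starting point in $\cO_K$ then yields a fixed point, which is a zero of $f$. This is essentially Hensel's lemma packaged by the SP property.

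Finally, the general inequality involving a minimizer $e$ splits cleanly into the two cases just treated. If a zero $d$ exists in $\cO_K$, take $e = d$: the minimum value of $|f|$ is $0$ and the claimed inequality becomes the equality of the first assertion. If no zero exists, $|f|$ is identically $|a_0|$ on $\cO_K$, so any choice of $e$ works and
$$
|f(x)| = |a_0| > |f|_{\SP} \geq |f|_{\SP}\cdot|x-e|,
$$
using $|x-e| \leq 1$. The one genuine obstacle in this proof is the existence of a zero when $|a_0| \leq |f|_{\SP}$, which is settled by the contraction-mapping argument enabled precisely by the SP hypothesis; everything else is an ultrametric manipulation of the key identity.
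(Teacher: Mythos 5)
Your proof is correct, and it takes a genuinely different (and more self-contained) route than the paper's. The paper gets the key estimate by translating: it sets $g(t)=f(t+d)$, observes $g$ is SP with $g(0)=0$, and reads off $|g(t)|=|g|_{\SP}|t|$; and it obtains the zero/no-zero dichotomy $|a_0|\leq |f|_{\SP}$ $\Longleftrightarrow$ $\exists d\in\cO_K$ with $f(d)=0$ by invoking non-archimedean Weierstrass preparation. You instead prove the symmetric identity $|f(x)-f(y)|=|f|_{\SP}\cdot|x-y|$ directly from the factorization $x^j-y^j=(x-y)(x^{j-1}+\cdots+y^{j-1})$ and the ultrametric inequality, which immediately yields the first case (take $y=d$) and the second-case computation $|f(x)-a_0|=|f|_{\SP}|x|<|a_0|$ (take $y=0$), and you replace Weierstrass preparation by the explicit contraction $T(x)=x-f(x)/a_1$ with contraction constant $\max_{j\geq 2}|a_j/a_1|\leq q_K^{-1}<1$, which is Hensel's lemma in this setting. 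Both are sound; the paper's version is shorter by leaning on a standard preparation theorem and on the fact (which needs a short check you avoid) that the translate $g$ is again SP, while your version is elementary and makes the unifying mechanism --- the single Lipschitz identity --- fully explicit, which is an honest gain in transparency at the cost of a few extra lines.
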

\begin{proof}
Clearly $|a_0| \leq   |f|_{\SP}$ if and only if there exists
$d\in\cO_K$ such that $f(d)=0$ (this follows for example from non-archimedean Weierstrass preparation). If $|a_0| >  |f|_{\SP}$ then clearly
$ |f (x) | = |a_0|$ for all $x\in \cO_K$ and this finishes the
second case. If $f(d)=0$ for some $d\in\cO_K$, then, with $g(t)=f(t+d)$, one has that
$g(0)=0$ and that $g$ is SP, which implies that
$|g(t)|=|g|_{\SP}|t|$ for all $t\in\cO_K$. This finishes the first
case since $|g|_{\SP}=|f|_{\SP}$. For the final statement, in the second case, any $e\in
\cO_K$ can serve; in the first case, one has to take $e=d$.
\end{proof}

\begin{cor}\label{SPff'}
Suppose that $K$ has characteristic zero.
 Let $f=\sum_{i\geq 0 }a_i x^i$ be in $K\lla
x\rra$. Suppose that $f'$ is SP and that $|f'(x)|>0$ for all $x\in\cO_K$. Then the SP-number of $f$ is at most equal to $\ord(p_K)$, the ramification degree of $K$.
\end{cor}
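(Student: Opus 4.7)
The plan is to fix $r:=\ord(p_K)$ and verify the defining property of Definition \ref{defSPn} directly: for every nonzero $c\in\cM_K^r$ and every $b\in\cO_K$, the power series $f_{b,c}(t)=c^{-1}f(b+ct)=\sum_{j\geq 0}\beta_j t^j$ should be SP. First, Lemma \ref{SP2alt} applied to the SP series $f'$, together with the hypothesis $|f'(x)|>0$ on $\cO_K$, puts us in the ``no zero'' case of that lemma; writing $b_0:=f'(0)$ and $b_1$ for the linear coefficient of $f'$, one obtains $|f'(x)|=|b_0|$ for all $x\in\cO_K$ together with $|b_0|>|f'|_{\SP}=|b_1|$. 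Taylor expansion of $f$ at $b$ (legitimate since $\mathrm{char}(K)=0$, so $j!\in K^\times$) then gives $\beta_1=f'(b)$ and $\beta_j=c^{j-1}f^{(j)}(b)/j!$ for $j\geq 2$, whence in particular $|\beta_1|=|b_0|\neq 0$.

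The next step is to bound $|f^{(j)}(b)|$ for $j\geq 2$. Writing $f'=\sum_{k\geq 0}b_k x^k$ and using $f^{(j)}=(f')^{(j-1)}$, every coefficient of the power series $f^{(j)}$ is of the shape $k!/(k-j+1)!\cdot b_k$ with $k\geq j-1\geq 1$. Since integers have norm $\leq 1$ on $K$ and $|b_k|\leq|b_1|$ for all $k\geq 1$ by SP-ness of $f'$, evaluating at $b\in\cO_K$ yields $|f^{(j)}(b)|\leq|b_1|$. It therefore suffices to check that $|c|^{j-1}|b_1|/|j!|<|b_0|$ for all $j\geq 2$; since $|b_1|/|b_0|<1$ strictly, this is implied by
\[
|c|^{j-1}\leq|j!|\qquad\text{for all }j\geq 2.
\]

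Translating the remaining inequality to orders, and using $\ord(c)\geq e_K=\ord(p_K)$ together with $\ord(j!)=e_K\cdot v_{p_K}(j!)$, the problem reduces to the purely number-theoretic statement that $v_p(j!)\leq j-1$ for every prime $p$ and every $j\geq 2$, which is immediate from Legendre's formula $v_p(j!)=(j-s_p(j))/(p-1)$ combined with the trivial bound $s_p(j)\geq 1$. This last reduction is the only delicate point of the argument: one really needs the sharp bound on $v_p(j!)$ (which is tight precisely when $p=2$ and $j$ is a power of $2$), since the cruder estimate $v_p(j!)\leq j/(p-1)$ already fails for $p=2$ and would not leave enough room to beat the $e_K$ powers of $p_K$ accumulated in $j!$ with the single factor $p_K^{e_K}$ extracted from $c\in\cM_K^{e_K}$.
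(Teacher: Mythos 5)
Your proof is correct and follows essentially the same route the paper gestures at: Taylor-expand $f_{b,c}$ in $t$ and bound each coefficient ultrametrically, feeding in the SP-ness of $f'$ together with the ``no zero'' case of Lemma \ref{SP2alt} (which gives $|f'(b)|=|a_1|$ and the strict gap $|b_1|<|a_1|$), then reduce to $|c|^{j-1}\leq|j!|$ and finish with Legendre's formula. One cosmetic slip to fix: you write ``$e_K=\ord(p_K)$'', but the paper reserves $e_K$ for the residue degree via $q_K=p_K^{e_K}$, not for the ramification index $\ord(p_K)$; your argument only ever needs $\ord(c)\geq\ord(p_K)$, so nothing breaks, but the notation should be corrected. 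More substantively, your careful bookkeeping — in particular the observation that the inequality $v_p(j!)\leq j-1$ is sharp exactly when $p=2$ and $j$ is a power of $2$ — does \emph{better} than the paper's sketch: the paper's proof claims that the smallest integer $r>\ord(p_K)/p_K$ already works and the remark afterwards records this as a supposedly stronger bound $\lfloor\ord(p_K)/p_K+1\rfloor$, but that is not correct. For example, take $K/\QQ_2$ totally ramified of degree $4$ with uniformizer $\pi$ satisfying $\pi^4=2$, and $f(x)=x+(\pi/2)x^2$; then $f'(x)=1+\pi x$ is SP with $|f'|_{\SP}=q_K^{-1}<1=|f'(x)|$, yet the quadratic coefficient of $f_{b,c}$ is $(\pi/2)c$ of norm $q_K^{3}|c|$, so one needs $\ord(c)\geq4=\ord(p_K)$, not $3$. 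Thus the bound $\ord(p_K)$ in the corollary is sharp, and your write-up is the argument that actually proves the statement as given.
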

\begin{proof}
By the second case of Lemma \ref{SP2alt}, we find $|a_1| >|2 a_2| >|ja_j|$ for all $j\geq 3$. Hence, if we take for $r$ the smallest integer satisfying $r>\ord(p_K)/p_K$, any nonzero $c\in \cM_K^r$, and any $b\in\cO_K$, then $f_{b,c}(t)$
is SP, as one can see by expanding in $t$.
\end{proof}
In fact, the above proof of Lemma \ref{SPff'} yields the stronger bound $ \lfloor \frac{\ord(p_K)}{p_K}+1 \rfloor$ 
for the SP-number of $f$.

\section{Oscillatory integrals}

We present $p$-adic and $\FF_q((t))$ analogues of Chapter VIII of \cite{Stein}, namely of what E.~Stein calls the theory of oscillatory integrals of the first kind.
 We motivate some of our choices for the possible reader with a better background in the  real setting than in the non-archimedean setting.
 For an oscillatory integral (of the first kind), typically of the form
$$
I(y) = \int_{\cO_K} \psi (y\cdot f(x)) g(x) |dx|,
$$
 where $\psi$ is the additive character on $K$ as introduced at the beginning of section \ref{sec:preli} and $|dx|$ is the Haar measure on $K$ normalized so that $\cO_K$ has measure $1$, the function $f$ is usually called the phase and $g$ the amplitude of the integral. For the  many variables analogue, $x$ or $y$ can be tuples of variables and $f$ can be a tuple of $K$-valued functions, and then $y\cdot f$ is the standard inner product.

In the non-archimedean set-up, $f$ takes values in $K$ while $g$ takes real or complex values. While in Stein's set-up $f$ and $g$ are usually assumed to be sufficiently smooth in the sense of sufficiently continuously differentiable, we will have to make choices on which functions $f$ and $g$ to focus: $C^\infty$ conditions on $f$ are too general because of the total disconnectedness of $K$ (and the implicit function theorem can fail for $C^k$ functions $K\to K$). We typically require that $f$ is given by a convergent power series. One usually requires that $g:\cO_K\to \CC$ is $C^{\infty}$. Any $C^\infty$ function $g:\cO_K\to \CC$ is locally constant, and by the compactness of $\cO_K$ it has finite image. Therefore, we will assume that $g$ is constantly equal to $1$; any $C^\infty$ function can be brought back to this situation by taking finite partitions, scaling the parts by homotheties, and replacing $g$ by a multiple.

By similar scaling arguments, one can usually reduce integrals over more general domains to integrals over $\cO_K$ (or over Cartesian powers of $\cO_K$), and conditions of the form $|f'(x)|\geq \varepsilon$ can be reduced to the more simple condition $|f_1'(x)|\geq 1$ where $f_1$ is a multiple of $f$. Hence several of the statements below, like e.g.~the van der Corput style Proposition \ref{osc(k)}, are more general than they seem at first sight.

\subsection{The one variable theory}


We first state an almost trivial variant of classically known results, Lemma \ref{oscSP}, about arbitrarily quick decays at infinity if the phase of the oscillatory integral is nice enough, where in our set-up nice enough means SP and quick decay actually means identically zero for large $y$.

\begin{lem}\label{oscSP}
Let $f(x)=\sum_{i\geq 0} a_ix^i$ in $K \lla x\rra$ be SP.  Then, for all $y\in K$ with $|y|\geq |a_1|^{-1}$ one has
$$
\int_{\cO_K} \psi (y\cdot f(x))|dx| = 0
$$
and,  for $y$ with $|y|< |a_1|^{-1}$ one has
$$
\int_{\cO_K} \psi (y\cdot f(x))|dx|= \psi(y\cdot a_0).
$$
Combining, one has
$$
|\ \int_{\cO_K} \psi (y\cdot f(x))|dx| \ | \leq  q_K^{-1} |a_1|^{-1}
|y|^{-1} \mbox{ for all nonzero } y.
$$
\end{lem}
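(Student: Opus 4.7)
The strategy is to linearise the phase by a change of variables, reducing the problem to the standard Fourier integral of the indicator of $\cO_K$. Since $f$ is SP we have $|a_i/a_1|<1$ for all $i\geq 2$, so the power series
$$
\phi(x):=\frac{f(x)-a_0}{a_1}=x+\sum_{i\geq 2}\frac{a_i}{a_1}x^i
$$
lies in $\cO_K\llp x\rrp$, has all non-linear coefficients in $\cM_K$, and satisfies $|\phi'(x)-1|<1$, hence $|\phi'(x)|=1$ for every $x\in\cO_K$. By the non-archimedean analytic inverse function theorem (or, equivalently, Hensel's lemma applied to $\phi(x)=u$ for each fixed $u\in\cO_K$, using that $\phi\equiv x\bmod\cM_K\cdot\cO_K\llp x\rrp$), $\phi$ is an analytic bijection $\cO_K\to\cO_K$ whose Jacobian has norm $1$ everywhere.

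Performing the substitution $u=\phi(x)$, so that $f(x)=a_0+a_1u$ and $|dx|=|du|$, the integral becomes
$$
\int_{\cO_K}\psi(y\cdot f(x))\,|dx|\;=\;\psi(y\,a_0)\int_{\cO_K}\psi(y\,a_1\,u)\,|du|.
$$
The remaining integral is elementary: because $\psi$ has conductor $\cM_K$, the character $u\mapsto\psi(ya_1u)$ of the compact group $(\cO_K,+)$ is trivial precisely when $ya_1\in\cM_K$, that is, when $|y|<|a_1|^{-1}$. In that range the integral equals $1$, giving the value $\psi(ya_0)$; outside that range a non-trivial character of a compact group integrates to zero, giving $0$. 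This establishes the first two displayed equalities.

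For the uniform bound, the case $|y|\geq|a_1|^{-1}$ is immediate since the integral vanishes. For $0<|y|<|a_1|^{-1}$, the discreteness of $|K^\times|=q_K^{\ZZ}$ forces $|y\,a_1|\leq q_K^{-1}$, whence $q_K^{-1}|a_1|^{-1}|y|^{-1}\geq 1=|\psi(ya_0)|$, which is exactly the inequality claimed. The only non-routine step in the whole argument is the verification that $\phi$ is a measure-preserving analytic automorphism of $\cO_K$; once that is in hand, everything else is a direct character-sum computation.
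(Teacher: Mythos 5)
Your proof is correct and takes essentially the paper's approach: both arguments reduce to the elementary integral $\int_{\cO_K}\psi(\lambda u)\,|du|$ and its trivial/non-trivial character dichotomy. You make the linearisation explicit via the measure-preserving change of variables $u=\phi(x)$ (justified by Hensel's lemma), whereas the paper compresses this step into a terse appeal to character-sum orthogonality after normalising to $|a_1|=1$.
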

\begin{proof}
There is no loss in replacing $f$ by a multiple so that one has
$|a_1|=1$. The equalities follow from the fact that $\psi$ is
trivial on $\cM_K$ and nontrivial on $\cO_K$, and from the basic
relation of character sums (namely, for a nontrivial
character $\omega$ on a finite abelian group $G$, the sum  $\sum_{g\in G}\omega(g)$ equals
zero). The summarizing statement follows from the fact that the norm of $\int_{\cO_K} \psi (y\cdot f(x))|dx|$ is always $\leq 1$ and that if  $|y|< |a_1|^{-1}$ then $q_K^{-1} |a_1|^{-1} |y|^{-1}\geq 1$.
\end{proof}

\subsubsection*{Van der Corput's Lemma}

Fix $f(x)=\sum_{i\geq 0} a_ix^i$ in $K \lla x\rra$ and write, for $y\in K$,
$$
I(y)=  \int_{\cO_K} \psi (y\cdot f(t))|dt|.
$$
Note that $I(y)$ is the non-archimedean analogue of the real integral $R(y)$ of the introduction.

\begin{prop}[Analytic, non-archimedean van der Corput Lemma]\label{osc(k)}
Suppose that
for some $k\geq 1$ one has that $|f^{(k)}(x)|\geq 1$ for all $x$ in
$\cO_K$. Then one has for all $y\in K^\times$
$$
| I(y) | \leq c_k
|y|^{-\frac{1}{k}},
$$
where $c_k$ only depends on $k$,  $q_K$, and on the Gauss norm of $f-f(0)$. Alternatively, if $K$ has characteristic zero, then $c_k$ can be taken only depending on $k$, $q_K$, the ramification degree $\ord(p_K)$ of $K$, and on the SP-number of $f^{(k-1)}$.

\end{prop}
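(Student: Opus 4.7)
The plan is to induct on $k$.

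For the base case $k=1$, Lemma~\ref{1SP} applied to $f$ provides a finite SP-number $r$ with $q_K^{r-1}\leq\|f-f(0)\|$. Picking $c\in K^\times$ with $|c|=q_K^{-r}$ and partitioning $\cO_K$ into the $q_K^r$ balls $b+c\cO_K$, the change of variables $x=b+ct$ combined with $|f_{b,c}|_{\SP}\geq 1$ and Lemma~\ref{oscSP} bounds the integral over each such ball by $q_K^{-1}|y|^{-1}$. Summing and comparing with the trivial bound $|I(y)|\leq 1$ gives $|I(y)|\leq c_1|y|^{-1}$ with $c_1$ controlled by $q_K$ and $\|f-f(0)\|$.

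For the induction step $k\geq 2$, apply Lemma~\ref{1SP} to $f^{(k-1)}$ (whose derivative is at least $1$ in absolute value) to obtain its SP-number $r=r_{k-1}$. Fix $c$ with $|c|=q_K^{-r}$ and partition $\cO_K=\bigsqcup_{j}B_j$ into the $q_K^r$ balls $B_j=b_j+c\cO_K$. On each $B_j$, Lemma~\ref{SP2alt} applied to the SP function $f^{(k-1)}_{b_j,c}$ (of SP-norm $\geq 1$) yields a distinguished point $x_j\in B_j$ such that $|f^{(k-1)}(x)|\geq|x-x_j|$ for all $x\in B_j$.

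The heart of the argument: for a parameter $\delta=q_K^{-m}$ with $\delta\leq|c|$ to be optimized, split each $B_j$ into a bad sub-ball $B_j^{\rm bad}=(x_j+\delta\cO_K)\cap B_j$ of measure $\leq\delta$ (handled by the trivial bound) and the good part, which I decompose into annular shells: for $i=1,\ldots,R$ with $q_K^R\delta=|c|$, the shell $\{x\in B_j:|x-x_j|=q_K^i\delta\}$ is a disjoint union of $q_K-1$ sub-balls of radius $q_K^{i-1}\delta$ on each of which $|f^{(k-1)}|\geq q_K^i\delta$. On such a good sub-ball $B'$ of radius $\rho$ with lower bound $M$ on $|f^{(k-1)}|$, the substitution $x=b'+c't$ with $|c'|=\rho$ followed by rescaling of the phase by $\lambda\in K^\times$ with $|\lambda|=(\rho^{k-1}M)^{-1}$ produces $g=\lambda\tilde f$ satisfying $|g^{(k-1)}|\geq 1$; the induction hypothesis at level $k-1$ applied to $g$, combined with the factor $\rho$ from the Jacobian cancelling the $|\lambda|^{1/(k-1)}=\rho^{-1}M^{-1/(k-1)}$ from the rescaling, yields
\[
\Bigl|\int_{B'}\psi(yf(x))\,|dx|\Bigr|\leq c_{k-1}\,M^{-1/(k-1)}|y|^{-1/(k-1)},
\]
independent of $\rho$. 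Summing over the $q_K-1$ sub-balls of each shell produces the geometric series $\sum_{i\geq 1}(q_K-1)(q_K^i\delta)^{-1/(k-1)}$, which converges since $q_K^{-1/(k-1)}<1$ for $k\geq 2$. Adding the bad-ball bound and summing over the $q_K^r$ balls $B_j$ yields $|I(y)|\leq q_K^r\delta+q_K^rC_{k,q_K}c_{k-1}\delta^{-1/(k-1)}|y|^{-1/(k-1)}$; optimizing $\delta$ over powers of $q_K^{-1}$ balances the two terms at $\delta\asymp(c_{k-1}|y|^{-1/(k-1)})^{(k-1)/k}$ and yields $|I(y)|\leq c_k|y|^{-1/k}$ with $c_k$ explicit in $c_{k-1}$, $q_K^r$, $q_K$, and $k$.

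I expect two main obstacles. First, the annular shell decomposition is essential: a naive uniform partition of the good part into equal-size sub-balls of diameter $\delta$ would introduce a spurious factor of $|c|/\delta$, after optimization degrading the exponent to the weaker $|y|^{-1/(2k-1)}$. Exploiting the fact that $|f^{(k-1)}|$ grows by a factor of $q_K$ between successive shells lets the per-shell contributions telescope into a geometric series of total size independent of the outer radius $|c|$; this telescoping step has no real counterpart in Stein's one-dimensional argument (where the good part is just two intervals). Second, uniformity of $c_{k-1}$ across sub-balls: since $g=\lambda\tilde f$ has Gauss norm $\|g-g(0)\|\leq|\lambda|\|f\|$ depending on $|\lambda|=(\rho^{k-1}M)^{-1}$, the Gauss-norm form of the induction constant is not uniform. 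The fix is to run the induction with the alternative (SP-number based) form of the proposition: the SP-number of $g^{(k-2)}$ equals the appropriate zoom-in SP-number of $f^{(k-2)}$ and is uniformly bounded by the SP-number of $f^{(k-2)}$ itself, which in characteristic zero is controlled by $\ord(p_K)$ and the SP-number of $f^{(k-1)}$ by iterated application of Corollary~\ref{SPff'}. The Gauss-norm statement of the proposition then follows by re-expressing SP-numbers via the bound $q_K^{r-1}\leq\|f^{(j)}-f^{(j)}(0)\|$ of Lemma~\ref{1SP}.
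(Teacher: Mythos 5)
Your argument reproduces the paper's proof essentially step for step: pass to the scale where $f^{(k-1)}$ becomes SP (Lemma~\ref{1SP}), locate the minimum of $|f^{(k-1)}|$ via Lemma~\ref{SP2alt}, carve out a bad ball of radius $\delta$ (the paper's $\gamma$) and cover the rest by annular sub-balls on which $|f^{(k-1)}|$ is bounded below by the distance to the minimum, rescale and apply the inductive bound there, take the trivial bound on the bad ball, sum the resulting geometric series, and optimize $\delta$ against $|y|$. Both of the "obstacles" you flag are indeed the crux of the adaptation from Stein's real argument and the paper handles them the same way: the annular decomposition with geometrically growing lower bounds is exactly the paper's balls $B_j = d_j + n_j\cO_K$ with $|n_j|=|d_j|/q_K$, and the uniformity of the inductive constant is controlled, as you propose, through SP-numbers via Corollary~\ref{SPff'} in the characteristic-zero alternative form (the paper's Gauss-norm version tracks instead the Gauss norm of the $(k-1)$st derivative of the zoomed phase, which stays bounded even though the Gauss norm of the full rescaled phase does not).
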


\begin{proof}


If $|y|< 1$ then in fact any $c_{k}\geq q_K^{-1}$ can do in the bound for $|I(y)|$. Hence, we may suppose that $|y|\geq 1$.
First we work for general $k$. By Lemma \ref{1SP} the SP-number of $f^{(k-1)}$ is an integer $r$. Let  $c$ be a generator of $\cM_K^r$. Note that
$$
|c|^{-1}\leq q_K \|f^{(k-1)} - f^{(k-1)}(0)\|\leq q_K \|f - f(0)\|
$$
 by Lemma \ref{1SP}.
 Let $b_i$ be a set of representatives in $\cO_K$ of $\cO_K/c\cO_K$, for $i=1,\ldots,|c|^{-1}$. Write
$$
f_{b_i,c,k}(t) = \frac{1}{c^k} f(b_i + ct).
$$
Then each of the $f^{(k-1)}_{b_i,c,k}$ is SP and satisfies
$|f_{b_i,c,k}^{(k-1)}|_{\SP}\geq 1$ by Lemma \ref{1SP} and the chain rule for differentiation.
 After a linear change of
variables and by the linearity of the integral we can write
$$
I(y) =
 \sum_{i=1}^{|c|^{-1}}  \int_{b_i+c \cO_K} \psi (y\cdot f(x))|dx| = |c| \sum_{i=1}^{|c|^{-1}}  \int_{\cO_K} \psi ( (c^ky) \cdot f_{b_i,c,k}(t))|dt|.
$$
If we abbreviate the $i$-th term as follows,
$$
I_i(y) := \int_{\cO_K}
\psi ((c^k y)\cdot f_{b_i,c,k}(t))|dt|,
$$
then
\begin{equation}\label{eq:|I(y)|2}
| I(y) | \leq  |c| \sum_{i=1}^{|c|^{-1}}  | I_i(y) |,
\end{equation}
or in words, $| I(y) |$ is bounded by the average value of the $| I_i(y) |$.

We now focus on the case that $k=1$. 
 By Lemma \ref{oscSP}, for each $i$,
$$
| I_i(y) | \leq
 q_K^{-1} | f_{b_i,c,1} |_{\SP}^{-1}|cy|^{-1}\leq q_K^{-1}  |cy|^{-1}
$$
and thus
$$
|\ \int_{\cO_K} \psi (y\cdot f(x))|dx|\ | \leq q_K^{-1} |cy|^{-1}.
$$
We are done by Lemma \ref{1SP} in the case that $k=1$.
\par

Finally fix $k\geq 2$ and suppose that the proposition is proved for all values up to $k-1$. So we
start from the condition that $|f^{(k)}|\geq 1$ on $\cO_K$.
 Recall that $f^{(k-1)}_{b_i,c,k}$ is SP  for each $i$ and satisfies
$|f_{b_i,c,k}^{(k-1)}|_{\SP}\geq 1$.
 Fix $i$ and suppose that $|f_{b_i,c,k}^{(k-1)} (d)|$ is minimal for some $d\in\cO_K$ among the values $|f_{b_i,c,k}^{(k-1)} (x)|$ for $x\in\cO_K$. Up to translating by $d$, we may suppose that $d=0$.
 Then, by Lemma
\ref{SP2alt},
\begin{equation}\label{bigi0}
|f_{b_i,c,k}^{(k-1)}(x)| \geq   |x|
\end{equation}
for all $x\in\cO_K$.
  Take a nonzero $\gamma\in\cO_K$. Partition $\cO_K$ into the ball
$$B_0:=\gamma \cO_K$$
 and $n$ balls of the form
$$
B_j:=d_{j}+ n_j \cO_K
$$
for $d_j$ with $|d_j|>|\gamma|$ and $n_j$ a generator of the ideal $d_j\cM_K$,
$j=1,\ldots,n$, and where necessarily $n= (q_K-1)\ord(\gamma)$.
 The ball $B_0$  will serve as a bad ball where we will use a trivial bound (namely the volume of $B_0$), while on the remaining $B_j$ we will use bounds coming from induction. At some point, we will optimize the choice of $\gamma$ for any given value of $y$ (which is similar to the proof of van der Corput's Lemma in \cite{Stein}). In this optimization, it is important that there are not too many parts $B_j$, which is indeed achieved by are choice of rather big radii $n_j$. 
Finally we will combine again the terms for all the $i$ by (\ref{eq:|I(y)|2}).
 We write by the linearity of the integral
\begin{equation}\label{Iij}
 I_i(y) = \sum_{j=0}^n  I_{ij}(y)
\end{equation}
with
$$
I_{ij}(y) := \int_{B_j} \psi (c^ky\cdot f_{b_i,c,k}(x))|dx|.
$$
Clearly 
$$
| I_{i0}(y)  |   \leq  \int_{B_0} | \psi (c^ky\cdot f_{b_i,c,k}(x)) | |dx|  = \int_{B_0}|dx| = |\gamma|.
 $$
 For $j=1,\ldots, n$ we can write, after a linear change of variables,
$$
I_{ij}(y) = |n_j |\int_{\cO_K} \psi
(c^k y \cdot g_{j}(t))|dt|  ,
$$
where
$$
g_{j}(t):= f_{b_i,c,k}(d_{j}+ n_j t).
$$
 By the definition of the SP-number, the $g_j^{(k-1)}$ are SP and by the chain rule $|g_j^{(k)}(t)|\geq |n_j^{k}|$ for all $t$ in $\cO_K$.  In fact,
the $g_j$ are even better than that, allowing us to use the induction hypothesis for each $j$. Indeed, by (\ref{bigi0}) and the chain rule one has  $|g_j^{(k-1)}(t)|\geq |n_j^{k-1} d_j| > 0 $ for all $t$ in $\cO_K$.
Note also that the Gauss norm of $g_j-g_j(0)$ is bounded by the Gauss norm of $(f-f(0))/c^k$, and that, in the case that $K$ has characteristic zero, the SP-number of $g_j^{(k-2)}$ is bounded by $\ord(p_K)$ by Corollary \ref{SPff'}.
  Therefore, we can use the
induction hypothesis in $k$ to $g_j$ to find
$$
| I_{ij}(y) | = | n_j | \cdot  |\ \int_{\cO_K} \psi (  c^k y \cdot g_{j}(t))|dt| \ |
 \leq  \widetilde c_{k-1}  \cdot  |d_j c^k y|^{-\frac{1}{k-1}} \leq c'_{k-1}  \cdot  |d_j y|^{-\frac{1}{k-1}},
$$
where $c_{k-1}'\geq \widetilde c_{k-1}|c|^{-\frac{k}{k-1}}$, and where $\widetilde c_{k-1}$ only depends on $k$, $q_K$, and $\|f - f(0) \| $. If $K$ has characteristic zero we can alternatively suppose that $\widetilde c_{k-1}$ only depends on $q_K$, $\ord(p_K)$, and $k$.
Hence, we may by Lemma \ref{1SP} suppose  that $c'_{k-1}$ only depends on $k$, $q_K$, and $\| f - f(0) \| $, or alternatively, if $K$ has characteristic zero, that $c'_{k-1}$ only depends on $k$, $q_K$, $\ord(p_K)$, and the SP-number of $f^{(k-1)}$.
 Summing up for $j=1,\ldots,n$ as in (\ref{Iij}) yields, still with
$n= (q_K-1)\ord(\gamma)$,
\begin{equation}\label{Ii(y)d}
| I_i(y) | \leq |\gamma| + c'_{k-1}
 |y|^{-\frac{1}{k-1}} \sum_{j=1}^n  |d_j |^{-{1/(k-1)}}.
\end{equation}
For each $\ell$ with $0\leq \ell<\ord(\gamma)$, there are exactly $q_K-1$ different $d_j$ with $\ord (d_j)=\ell$. Hence we can calculate:
$$
\sum_{j=1}^n  |d_j |^{-{1/(k-1)}} =
 (q_K-1)  \sum_{\ell\geq 0}^{\ord(\gamma)-1} (q_K^{1/(k-1)})^\ell
$$
$$
=(q_K-1) \frac{ |\gamma|^{-1/(k-1)} - 1}{ q_K^{1/(k-1)} - 1}
$$
$$
\leq  |\gamma|^{-1/(k-1)}   \frac{ q_K-1  }{ q_K^{1/(k-1)} - 1}
$$
Combining with (\ref{Ii(y)d}) yields
\begin{equation}\label{ygamma}
|I_i(y)  | \leq |\gamma| + c''_{k-1}
 |\gamma y|^{-\frac{1}{k-1}}
\end{equation}
 for some $c''_{k-1}$ only depending on $k$, $q_K$, and $\| f - f(0) \| $, resp.~in characteristic zero only depending on $k$, $q_K$, $\ord(p_K)$, and the SP-number of $f^{(k-1)}$.
 Recall that we are considering $y$ with $|y|\geq 1$.
Choose $\gamma$ in $\cO_K$
such that
\begin{equation}\label{gamma}
q_K^{-1} |y|^{-1/k} \leq  |\gamma| < |y|^{-1/k}.
\end{equation}
 Together with  (\ref{ygamma}) this gives
\begin{equation}\label{c'''k}
|I_i(y)  | \leq c'''_{k}
 |y|^{-\frac{1}{k}}
\end{equation}
 for some $c'''_{k}$ only depending on $k$, $q_K$, and $\| f - f(0)\| $, resp.~only depending on $k$, $q_K$, $\ord(p_K)$, and the SP-number of $f^{(k-1)}$.
Putting the bounds (\ref{c'''k}) in (\ref{eq:|I(y)|2}) yields  the desired bound for $|I(y)|$ in terms of some constant $c_k$ only depending on $k$, $q_K$, and $\| f - f(0)\| $, resp.~in characteristic zero only depending on $k$, $q_K$, $\ord(p_K)$, and the SP-number of $f^{(k-1)}$.
\end{proof}



As a corollary of Proposition \ref{osc(k)}, we make a link between Weierstrass regularity of some derivative of $f$ and the conditions of the van der Corput Lemma \ref{osc(k)}, to find back a generalization of the main thrust (Lemma 3) of \cite{Rogers}, from which Rogers derives in a beautiful and direct way all principal results of \cite{Rogers}. Rogers  gives in Lemma 3 of \cite{Rogers}, in the case that $f$ is a polynomial over $\QQ_p$ and only treating the case $j=1$, explicit values for the $c_{m,\QQ_p}$ of Corollary \ref{general:Rogers}.

We still consider our fixed  $f$ in in $K \lla x\rra$ and the corresponding oscillating integral $I(y)$ as just above Proposition \ref{osc(k)}.
\begin{cor}\label{general:Rogers}
Suppose that  $f^{(j)}$ is (Weierstrass) regular of degree $m-j>0$ for some $j\geq 0$. If the characteristic of $K$ is zero, then there exists $c_{m,K}$, only depending on $m$, $q_K$, and $\ord(p_K)$, such that, for all nonzero $y\in K$,
$$
| I(y) | \leq c_{m,K} |y|^{-1/m}.
$$
\end{cor}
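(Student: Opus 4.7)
The plan is to reduce Corollary \ref{general:Rogers} to the van der Corput Proposition \ref{osc(k)} with $k=m$, after first using Lemma \ref{d(d-1)} to convert the Weierstrass regularity of $f^{(j)}$ into an SP condition on $f^{(m-1)}$, and then rescaling the phase so that the normalization $|\tilde f^{(m)}|\geq 1$ required by Proposition \ref{osc(k)} holds.

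Set $d := m - j \geq 1$. Since $f^{(j)}$ is Weierstrass regular of degree $d$, the series $f^{(j)}(x) - f^{(j)}(0)$ is still congruent to a monic polynomial of degree $d$ modulo $\pi_K$, so the unique element of $K^\times$ appearing in Lemma \ref{d(d-1)} applied to $f^{(j)}$ equals $1$. The characteristic-zero hypothesis allows us to invoke that lemma, yielding that
$$
f^{(m-1)} = (f^{(j)})^{(d-1)} \text{ is SP, with } |f^{(m-1)}|_{\SP} = |d!| = |(m-j)!|.
$$

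Now introduce the rescaled phase $\tilde f := ((m-j)!)^{-1} f \in K\lla x\rra$. Scaling by a nonzero element of $K$ preserves the SP property, so $\tilde f^{(m-1)}$ is SP with $|\tilde f^{(m-1)}|_{\SP} = 1$. By the observation immediately following Definition \ref{defSP} this gives $|\tilde f^{(m)}(x)| = 1$ for every $x \in \cO_K$, and by Example \ref{ex:SPGauss} the SP-number of $\tilde f^{(m-1)}$ equals $0$. Applying the characteristic-zero form of Proposition \ref{osc(k)} with $k = m$ to the phase $\tilde f$ then yields a constant $\tilde c_m$, depending only on $m$, $q_K$, and $\ord(p_K)$, such that
$$
\left| \int_{\cO_K} \psi(y' \cdot \tilde f(x)) \, |dx| \right| \leq \tilde c_m \, |y'|^{-1/m}
$$
for every $y' \in K^\times$.

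Finally, the substitution $y' = (m-j)!\, y$ gives $\psi(y'\cdot\tilde f(x)) = \psi(y\cdot f(x))$, so that
$$
|I(y)| \leq \tilde c_m \, |(m-j)!|^{-1/m} \, |y|^{-1/m}.
$$
As $j$ ranges over $\{0, 1, \ldots, m-1\}$ the factor $|(m-j)!|^{-1/m}$ takes only finitely many values, each determined by $m$, $q_K$ and $\ord(p_K)$; setting $c_{m,K}$ to be the maximum of $\tilde c_m\,|(m-j)!|^{-1/m}$ over these finitely many cases gives the claimed estimate. The only real subtlety is the factorial rescaling, forced on us because Proposition \ref{osc(k)} is normalized to require $|f^{(k)}|\geq 1$ rather than a general positive lower bound; everything else is a direct chaining of Lemma \ref{d(d-1)} with the van der Corput estimate.
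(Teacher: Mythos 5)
Your proof is correct and follows essentially the same route as the paper: invoke Lemma \ref{d(d-1)} for the phase $f^{(j)}$ (with scaling constant $c=1$) to conclude that $f^{(m-1)}$ is SP with $|f^{(m-1)}|_{\SP}=|(m-j)!|$, hence $|f^{(m)}(x)| = |(m-j)!|$ and the SP-number of $f^{(m-1)}$ is $0$, and then apply Proposition \ref{osc(k)} with $k=m$. The only difference is that you carry out explicitly the rescaling $\tilde f = ((m-j)!)^{-1}f$ needed to meet the normalization $|\tilde f^{(m)}|\geq 1$ in Proposition \ref{osc(k)} and the ensuing uniformity in $j$, whereas the paper leaves this routine reduction (flagged in the paragraph preceding \S3.1) implicit.
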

\begin{proof}
Clearly on the one hand
$$
|f^{(m)}(x)| \geq |(m-j)!| \mbox{ for all } x\in\cO_K,
$$
and on the other hand, the SP-number of $f^{(m-1)}$ is zero. Now apply Proposition \ref{osc(k)}.
\end{proof}

%
%
%
%


\subsection{Several variables}\label{sec:sev}

Now that we have obtained the non-archimedean analogue of the van der Corput Lemma for analytic phases, we can grasp its rewards and develop the theory in great analogy to \cite[Sections 2 and 3, Chapter VIII]{Stein}. Note that in \cite{Cexp}, decay rates for higher dimensional non-archimedean Fourier transforms have been obtained for $L^1$-functions of a certain constructible nature, related to subanalytic functions. Here we will find more explicit decay rates, in a different setting as in \cite{Cexp} which is in some ways more general and in other ways more restricting.

From now on we will consider tuples of variables  $x=(x_1,\ldots,x_n)$, writing $K \lla x\rra$ for the collection of power series in the
variables $x$ over $K$ which converge on $\cO_K^n$, that is, those
power series $\sum_{i\in\NN^n} a_ix^i\in K[[x]]$ satisfying that
$|a_i|$ goes to zero when $|i|:=\sum_{j=1}^n i_j$ goes to infinity. Likewise, we write
$\cO_K \lla x\rra$ for power series in $K \lla x\rra$ which also lie
in $\cO_K[[x]]$ and for $f(x)\in K\lla x \rra $, we write $\| f\| $ for the
Gauss norm of $f$, which is $\sup_{i\in\NN^n}|a_i|$.

The following is the non-archimedean analogue of \cite[Proposition 5, Chapter VIII]{Stein} for analytic phase in the oscillating integral (where \cite{Stein} is for real, smooth phase); note that in our proposition the Gauss norm of  $f-f(0)$ plays the role of the $C^{k+1}$ norm of the phase in Proposition 5 of \cite[Chapter VIII]{Stein}.

\begin{prop}\label{p:several}
Let $f(x)$ be a power series in $K \lla x\rra$ in the variables $x=(x_1,\ldots,x_n)$.
 Suppose that for some multi-index $\alpha\in\NN^n$ with $|\alpha|>0$, one has
 $$
 |\partial^\alpha_x f(x)|\geq 1 \mbox{ for all }x\in\cO_K^n,
 $$
 where $|\alpha|=\sum_j\alpha_j$ and $\partial^\alpha_x f= (\prod_j \frac{\partial^{\alpha_j}}{\partial x_j^{\alpha_j}}) f$. Suppose also that the characteristic of $K$ is $>|\alpha|$.
Then, for all nonzero $y\in K$,
$$
|\ \int_{\cO_K^n} \psi (y\cdot f(x))|dx| \ | \leq  d_k
|y|^{-\frac{1}{k}},
$$
where $d_k$ only depends on $K$, $n$, $k=|\alpha|$, and on $\| f-f(0)\| $.
\end{prop}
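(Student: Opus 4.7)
The plan is to induct on the number of variables $n$, using the one-variable Proposition~\ref{osc(k)} as the engine; this mirrors the proof of Stein's real Proposition~5 in \cite[Chapter VIII]{Stein}. After the trivial reduction to $|y|\geq 1$, the base case $n=1$ is exactly Proposition~\ref{osc(k)}. For the inductive step ($n\geq 2$), reorder coordinates so that $\alpha_n\geq 1$ and write $\alpha=(\alpha',\alpha_n)$ with $\alpha'=(\alpha_1,\ldots,\alpha_{n-1},0)$. If $\alpha'=0$ (so $\alpha=k\,e_n$ is a pure partial), Fubini reduces matters to the one-variable case directly: for each fixed $x'\in\cO_K^{n-1}$, the function $x_n\mapsto f(x',x_n)$ has $|\partial_n^k f(x',\cdot)|\geq 1$ on $\cO_K$ and Gauss norm at most $\|f\|$, so Proposition~\ref{osc(k)} bounds the inner integral by $c_k|y|^{-1/k}$, and integrating over $x'$ gives the result.

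If $\alpha'\neq 0$, so $|\alpha'|=k-\alpha_n\in[1,k-1]$, I would combine a sublevel-set estimate with the inductive hypothesis. For each fixed $x'$, set $\phi_{x'}(x_n):=\partial^{\alpha'}_{x'}f(x',x_n)$; by hypothesis $|\phi_{x'}^{(\alpha_n)}|\geq 1$ on $\cO_K$. Iterated application of Lemma~\ref{SP2alt} (together with SP-number control from Lemma~\ref{1SP} at each stage, which remains effective thanks to $\mathrm{char}(K)>k$) yields the uniform sublevel-set bound
$$
\mathrm{meas}\{x_n\in\cO_K : |\phi_{x'}(x_n)|<\epsilon\}\leq C_1\,\epsilon^{1/\alpha_n},
$$
with $C_1$ depending only on $K$, $k$, and $\|f\|$. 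By Fubini the bad set $B_\epsilon:=\{x\in\cO_K^n : |\partial^{\alpha'}_{x'}f(x)|<\epsilon\}$ has measure $\leq C_1\epsilon^{1/\alpha_n}$, and hence the integral of $\psi(y\cdot f)$ over $B_\epsilon$ is bounded in absolute value by $C_1\epsilon^{1/\alpha_n}$.

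To control the integral over $G_\epsilon=\cO_K^n\setminus B_\epsilon$, I would partition $\cO_K^n$ into small product balls, fine enough (using the Lipschitz nature of $\partial^{\alpha'}_{x'}f$ as a convergent power series) that each ball lies entirely in $B_\epsilon$ or entirely in $G_\epsilon$. On each ball $B\subset G_\epsilon$ the relation $|\partial^{\alpha'}_{x'}(f/\epsilon)|\geq 1$ holds, and after rescaling the $x'$-slice to standard form $\cO_K^{n-1}$, the inductive hypothesis in $n-1$ variables with $k'=k-\alpha_n<k$ bounds the corresponding slice integral by a constant times $|y\epsilon|^{-1/(k-\alpha_n)}$. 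Summing over balls (total $x'$-volume $\leq 1$) and integrating trivially in $x_n$ gives an estimate of the form $C_2\,|y\epsilon|^{-1/(k-\alpha_n)}$ for the integral over $G_\epsilon$. Optimizing $\epsilon=|y|^{-\alpha_n/k}$ balances the two contributions to yield the desired $|I(y)|\leq d_k|y|^{-1/k}$.

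The main obstacle will be keeping the constants in the ball-partition step uniform: one must check that after the affine rescaling to $\cO_K^{n-1}$, the renormalized phase $f/\epsilon$ has Gauss norm still controllable in terms of $\|f\|$, $K$, $n$, $k$, so that the constant produced by the inductive hypothesis is independent of which ball one is in. This uniformity rests on the convergent/analytic nature of $f$ and on the characteristic hypothesis $\mathrm{char}(K)>k$ entering (via Lemma~\ref{d(d-1)} and Corollary~\ref{SPff'}) to keep SP-number bounds uniform at every stage of the iterated sublevel-set argument.
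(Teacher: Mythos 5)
The core strategy — induction on the number of variables, a sublevel-set estimate for the bad set $B_\epsilon$, the inductive hypothesis on the good set $G_\epsilon$, and optimization of $\epsilon$ — is a natural and classical-looking attack, but it has a genuine gap exactly where you flag it, and the flagged obstacle is not in fact overcome by the tools you name. The constant $d_{k'}$ produced by the inductive hypothesis (and likewise $c_k$ in Proposition~\ref{osc(k)}) depends monotonically, in fact at least polynomially, on the Gauss norm of the (recentered) phase; this is visible already in the $k=1$ case of Proposition~\ref{osc(k)}, where via Lemma~\ref{1SP} one gets $c_1\sim\|f-f(0)\|$. Your rescaled phase $f/\epsilon$ has Gauss norm $\|f\|/\epsilon=\|f\|\,|y|^{\alpha_n/k}$, so the constant arising in the estimate over $G_\epsilon$ grows with $|y|$ and destroys the decay: the resulting bound for the $G_\epsilon$-integral is of the form $(\|f\|\,|y|^{\alpha_n/k})^{m}\cdot|y|^{-1/k}$ for some $m\geq 1$, not $d_k|y|^{-1/k}$. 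The citation of Lemma~\ref{d(d-1)} and Corollary~\ref{SPff'} controls SP-numbers when the characteristic is large, but it does not and cannot decouple the constants from the Gauss norm, which is where the $y$-dependence enters. There is also a second, smaller issue: the good set $G_\epsilon$ is only an open subset, not a full polydisc, and covering it by balls and rescaling each to $\cO_K^{n-1}$ further deteriorates the lower bound $|\partial^{\alpha'}(f/\epsilon)|\geq 1$ by factors $|r'|^{|\alpha'|}$, again pushing the Gauss norm of the renormalized phase up.

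The paper's proof avoids all of this by an orthogonal idea: Lemma~\ref{l:hom} writes $x^\alpha=\sum_i e_i(\xi_i\cdot x)^k$ in terms of directional $k$-th powers, with the vectors $\xi_i$ and coefficients $e_i$ depending only on $k$, $n$, $K$. The hypothesis $|\partial^\alpha f|\geq 1$ then forces, at each point $x_0$, a bound $|(\xi_i\cdot\nabla)^k f(x_0)|\geq|1/e_i|$ for some $i$, and this persists on a ball $B(x_0)$ of radius determined purely by $\|f-f(0)\|$ and $|e_i|$ — crucially, not by $y$. After an affine change of coordinates aligning $x_1$ with $\xi_i$, Fubini reduces matters to the one-variable Proposition~\ref{osc(k)} on each slice, with a rescaled phase whose Gauss norm is controlled by $\|f-f(0)\|$, $K$, $n$, $k$ uniformly in $y$. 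That fixed-radius localization is the mechanism your scheme is missing. Without something replacing it — either making the Gauss-norm dependence of $d_{k'}$ sufficiently mild and tracking it through the optimization (which the paper's constants do not permit), or restructuring so that no $y$-dependent rescaling of the phase amplitude is needed — the inductive sublevel-set argument as written does not close.
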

\begin{proof}
Consider the $K$-vector space $V_{k,n}(K)$ of homogeneous polynomials of degree $k$ over $K$ in the $n$ variables $x=(x_1,\ldots,x_n)$. By Lemma \ref{l:hom}, there are vectors $\xi_1,\ldots\xi_d$ in $K^n$ of length $1$ (that is, $|\xi_i|=1$) such that the homogeneous polynomials
$$
(\xi_i\cdot x)^k, \quad i=1,\ldots,d
$$
form a basis for this vector space, with $d$ the dimension of $V_{k,n}(K)$. Express the monomial $x^\alpha$ in this basis as
$$
x^\alpha = \sum_i e_i (\xi_i\cdot x)^k,\qquad e_i\in K.
$$
Then, for $x_0\in\cO_K^n$,
$$
1\leq |\partial^\alpha_x f(x_0)| = | \sum_i e_i (\xi_i\cdot \nabla)^k f(x_0) | \leq \max_i ( | e_i (\xi_i\cdot \nabla)^k f(x_0) | )
$$
and hence
$$
| (\xi_i\cdot \nabla)^k f(x_0)| \geq  | 1/e_i |
$$
for at least one $i$ with $e_i\not = 0$. Note that this implies that $\| e_i(f-f(0))\| \geq 1$.
Hence, for such  $i$,
$$
| (\xi_i\cdot \nabla)^k f(x)| \geq  | 1/e_i |
$$
for all $x$ in the ball $B(x_0):= x_0 +  c \cO_K^n $ around $x_0$ with $c\in \cM$ satisfying $|c|= \| e_i(f-f(0))\| ^{-1}q_K^{-1}$.  Define $g(z)$ as
$e_ic^{-k}f(x_0+cz)$ with $z=(z_1,\ldots,z_n)$. Then $g(z)$ is in $K \lla z\rra$  and satisfies
 $$
 |(\xi_i\cdot \nabla)^k g|\geq 1.
 $$
After a measure preserving affine change of variables on $K^n$ such that $x_1$ lies along $\xi_i$, we may suppose that $\xi_i=(1,0,\ldots,0)$, and thus that
 $$
 | (\partial^k/\partial z_1^k) g(z)|\geq 1 \quad \mbox{ for all } z\in \cO_K^n.
 $$
For each $a_2,\ldots,a_n$ in $\cO_K$, the Gauss norm of  $g(t,a_2,\ldots,a_n)-g(0,a_2,\ldots,a_n)$ (where this power series lies in $K \lla t\rra$), is bounded by   $\|g-g(0)\|$. Hence, by Proposition \ref{osc(k)}, we find
$$
|\ \int_{B(x_0)} \psi (y\cdot f(x))|dx| =
$$
$$
|c^{n}|\cdot|\ \int_{\cO_K^n} \psi ( \frac{c^k y}{e_i} \cdot g(z))|dz| \ | = |c^{n}|\cdot |\ \int_{\cO_K^{n-1}}( \int_{\cO_K} \psi ( \frac{c^k y}{e_i}\cdot g(z)) |dz_1|   ) |dz_2\ldots dz_n| \ |
$$
$$
\leq |c^{n}|\cdot |\ \int_{\cO_K^{n-1}} c_k |c|^{-1} | y|^{-\frac{1}{k}} |dz_2\ldots dz_n| \ | = c_k |c|^{n-1} | y|^{-\frac{1}{k}}
$$
where $c_k$ only depends on $k$, $n$, $K$, and on the Gauss norm of
$g-g(0)$.
Since the Gauss norm of $g-g(0)$ is bounded by $|e_i c^{-k}|\cdot \| (f-f(0))\| $, and since
$$
|\ \int_{\cO_K^n} \psi (y\cdot f(x))|dx| \ | \leq \sum_{i=1}^{|c|^{-n}}  |\ \int_{B(b_i)} \psi (y\cdot f(x))|dx| \ |
$$
for any set $b_i$ of representatives of $\cO_K^n$ modulo $c\cO_K^n$, we are done.
\end{proof}

The following elementary lemma and its proof are a close adaptation of \cite[Chapter VIII, 2.2.1]{Stein} to a slightly more general setting.

\begin{lem}\label{l:hom}
Let $k>0$ be an integer and $x=(x_1,\ldots,x_n)$ variables. Let $L$ be an infinite field of characteristic $>k$.
Then the polynomials of the form
$$
(\xi\cdot x)^k,\quad \xi\in L^n,
$$
where $\xi\cdot x=\sum_i \xi_i x_i$, span the $L$-vector space $V_{k,n}(L)$ of homogeneous polynomials of degree $k$ in $x$ over $L$.
\end{lem}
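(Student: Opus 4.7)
\medskip

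\noindent\textbf{Proof proposal.} The plan is to use duality: to show that the vectors $(\xi\cdot x)^k$ span $V_{k,n}(L)$, it suffices to show that any linear functional $\phi : V_{k,n}(L)\to L$ which vanishes on all of them must itself be zero.

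First I would expand, using the multinomial theorem,
$$
(\xi\cdot x)^k = \sum_{|\alpha|=k} \binom{k}{\alpha}\, \xi^\alpha\, x^\alpha,
$$
where $\binom{k}{\alpha} = k!/(\alpha_1!\cdots\alpha_n!)$. The assumption that $\mathrm{char}(L) > k$ (or zero) guarantees that each multinomial coefficient $\binom{k}{\alpha}$ is nonzero in $L$. This is the step where the characteristic hypothesis enters essentially.

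Now let $\phi \in V_{k,n}(L)^\ast$, write $\phi\bigl(\sum_\alpha p_\alpha x^\alpha\bigr) = \sum_{|\alpha|=k} c_\alpha p_\alpha$ for uniquely determined $c_\alpha\in L$, and assume $\phi((\xi\cdot x)^k) = 0$ for every $\xi\in L^n$. Then
$$
P(\xi) := \sum_{|\alpha|=k} c_\alpha \binom{k}{\alpha}\, \xi^\alpha
$$
is a polynomial in $\xi=(\xi_1,\ldots,\xi_n)$ with coefficients in $L$ that vanishes identically on $L^n$. Since $L$ is infinite, a standard induction on $n$ (repeatedly applying the fact that a nonzero one-variable polynomial over an infinite field has only finitely many roots) forces $P$ to be the zero polynomial, so $c_\alpha \binom{k}{\alpha} = 0$ for every $\alpha$ with $|\alpha|=k$. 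Combined with the nonvanishing of $\binom{k}{\alpha}$ in $L$, this gives $c_\alpha=0$ for all $\alpha$, i.e.~$\phi = 0$.

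The main (minor) obstacle is simply to be careful that both the infiniteness of $L$ and the characteristic condition are used correctly: infiniteness is needed to conclude that $P$ is the zero polynomial from its vanishing on $L^n$, while $\mathrm{char}(L)>k$ is needed to strip off the multinomial coefficients. Once both inputs are in place the argument is routine.
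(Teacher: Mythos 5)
Your proof is correct and in substance the same as the paper's: both argue by duality, reducing the span claim to showing that a linear functional (equivalently, a polynomial ``orthogonal'' to every $(\xi\cdot x)^k$) must vanish, and both use precisely the two hypotheses in the same way --- $\mathrm{char}(L)>k$ to make the multinomial coefficients (equivalently, $k!$) nonzero, and infiniteness of $L$ to pass from vanishing on $L^n$ to vanishing as a polynomial. The only difference is presentational: the paper packages the dual pairing as the inner product $\langle P,Q\rangle = \sum_\alpha \alpha!\,a_\alpha b_\alpha$ and uses the identity $\langle P,(\xi\cdot x)^k\rangle = (\xi\cdot\nabla)^k P = k!\,P(\xi)$, whereas you expand $(\xi\cdot x)^k$ directly by the multinomial theorem and read off the coefficients of the dual functional; the latter is slightly more elementary and arguably makes the role of each hypothesis more transparent, but it is the same argument.
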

\begin{proof}
On this vector space $V_{k,n}(L)$, consider the inner product (that is, bi-linear mapping to $L$)
$$
\langle P,Q \rangle = \alpha! a_\alpha b_\alpha,
$$
where $P(x)=\sum a_\alpha x^\alpha$ and $Q(x)=\sum b_\alpha x^\alpha$ and where $\alpha! = \prod_j (\alpha_j!)$. Note that
$$
\langle P,Q \rangle = (Q(\partial/\partial x) )(P),
$$
where the polynomials are derivated formally and where $\partial/\partial x = (\partial/\partial x_1,\ldots,\partial/\partial x_n)$.
Thus, if $P$ were orthogonal to all the polynomials of the form $(\xi\cdot x)^k$, then
$$
(\xi\cdot\nabla)^k(P)= 0,\quad \mbox{ for all } \xi\in L^n.
$$
In other words,
\begin{equation*}
(\partial/\partial t )^k P(t\xi)=0 \quad \mbox{ for all } \xi\in L^n,
\end{equation*}
which can happen only if $P(x)$ is the zero polynomial.
Indeed,
$$
(\partial/\partial t )^k P(t\xi) = (\partial/\partial t )^k (t^kP(\xi)) = k! P(\xi).
$$
From this we can draw our conclusions. Suppose that the space spanned by the $(\xi\cdot x)^k$ has strictly smaller dimension than $V_{k,n}(L)$. Let $P_1,\ldots,P_{d'}$ be a basis for this span. But then
$$
\bigcap_{j=1,\ldots,d'} P_j^{\bot},
$$
where $P_j^{\bot}=\{Q\in V_{k,n}\mid P_j\cdot Q = 0\}$,
has dimension $>0$, since this intersection is the solution set of $d'$ homogeneous linear equations on $V_{k,n}(L)$. We are done by contradiction.

\end{proof}

\begin{remark}
A complex valued $C^\infty$ function $h$ with compact support defined on an open subset of $K^n$ is automatically locally constant, and it is constant on each ball in a finite partition of the support of $h$ into balls. Hence, we simplify notation by working with characteristic functions of balls to serve, for example, as amplitudes, instead of with complex valued $C^\infty$ functions with compact support (which are the so-called Schwartz-Bruhat functions). The adaptation in the following theorem with an amplitude which is a Schwartz-Bruhat function is trivial to make.
We will simplify likewise in section \ref{sec:Fourier}.
\end{remark}

The following result for mappings is closely related to Theorem \ref{thm:Fourier:finitetype} below.

\begin{prop}[Mappings in several variables]\label{thm:multi/several}
Let $f_1,\ldots,f_n$ be power series in $K \lla x\rra$ in the variables $x=(x_1,\ldots,x_d)$ and let $k\geq 1$ be an integer.
 Suppose that for each $v\in K^n$ of length $1$ and for each $x\in K^d$, there exists some multi-index $\alpha\in\NN^d$ with $ k \geq |\alpha|>0$ and
 $$
 |v\cdot(\partial^\alpha_x f(x)) |\geq 1,
 $$
 where $|\alpha|=\sum_j\alpha_j$ and $\partial^\alpha_x f= ( (\prod_j \frac{\partial^{\alpha_j} }{\partial x_j^{\alpha_j}}) f_i )_i$. Suppose also that the characteristic of $K$ is $>k$.
Then there exists a constant $c$ such that
$$
|\ \int_{\cO_K^n} \psi (y\cdot f(x))|dx| \ | \leq  c
|y|^{-\frac{1}{k}}
$$
for all nonzero $y\in K^n$. Moreover, $c$ only depends on $K$, $n$, $k$, and on $\| f-f(0)\| $.
\end{prop}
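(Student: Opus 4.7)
The plan is to reduce to the one-phase result of Proposition \ref{p:several}, absorbing the fact that the relevant multi-index $\alpha$ depends on both $v$ and $x$ via a compactness-based clopen partition.

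First I factor the size out of $y$. Write $y=\lambda v$ with $v\in\cO_K^n$, $|v|=1$, and $\lambda\in K$ satisfying $|\lambda|=|y|$; then $y\cdot f(x)=\lambda\,(v\cdot f(x))$, so
$$
I(y)=\int_{\cO_K^d}\psi\bigl(\lambda\cdot\phi_v(x)\bigr)\,|dx|,\qquad \phi_v(x):=v\cdot f(x)\in K\lla x\rra.
$$
The trivial bound handles $|y|\leq 1$, so I may assume $|\lambda|\geq 1$.

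The main obstacle is the partitioning step. Set $S:=\{v\in\cO_K^n:|v|=1\}\times\cO_K^d$, a compact clopen subset of $\cO_K^{n+d}$. For each multi-index $\alpha\in\NN^d$ with $1\leq|\alpha|\leq k$, the function $h_\alpha(v,x):=v\cdot\partial^\alpha_x f(x)$ is continuous on $S$, and since its absolute value takes values in the discrete set $q_K^{\ZZ}\cup\{0\}$, the set $U_\alpha:=\{(v,x)\in S:|h_\alpha(v,x)|\geq 1\}$ is clopen. By hypothesis the finitely many $U_\alpha$ cover $S$. Disjointifying, write $S=\bigsqcup_\alpha W_\alpha$ with $W_\alpha\subseteq U_\alpha$ clopen. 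Every clopen subset of $\cO_K^{n+d}$ is a finite disjoint union of balls, and a ball there equals a product of balls of the same radius in $\cO_K^n$ and $\cO_K^d$; hence each $W_\alpha$ decomposes as a finite disjoint union of product balls $B^v_{\alpha,j}\times B^x_{\alpha,j}$. Let $N$ be the total number of pairs $(\alpha,j)$; both $N$ and the radii occurring depend only on $f$, $K$, $n$, $d$, $k$. For any fixed length-one $v$, the balls $B^x_{\alpha,j}$ with $v\in B^v_{\alpha,j}$ form a partition of $\cO_K^d$ of cardinality at most $N$, on each of which a single multi-index $\alpha$ with $|\alpha|\leq k$ satisfies $|v\cdot\partial^\alpha_x f(x)|\geq 1$ uniformly.

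Finally I apply Proposition \ref{p:several} on each piece. Write such a piece as $B=d_0+c_0\,\cO_K^d$ with $c_0\in K^\times$, $|c_0|\leq 1$ (and $|c_0|$ bounded below by a constant coming from the partition). Change variables $x=d_0+c_0 z$ and rescale by setting $\tilde g(z):=c_0^{-|\alpha|}\phi_v(d_0+c_0 z)$. The chain rule gives $|\partial^\alpha_z\tilde g(z)|=|v\cdot\partial^\alpha_x f(d_0+c_0z)|\geq 1$ on $\cO_K^d$, while $\|\tilde g-\tilde g(0)\|\leq|c_0|^{-|\alpha|}\|f-f(0)\|$ is uniformly bounded. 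Since the characteristic of $K$ exceeds $k\geq|\alpha|$, Proposition \ref{p:several} applies and yields
$$
\Bigl|\int_{\cO_K^d}\psi\bigl(c_0^{|\alpha|}\lambda\,\tilde g(z)\bigr)\,|dz|\Bigr|\leq d_{|\alpha|}\,|c_0^{|\alpha|}\lambda|^{-1/|\alpha|}.
$$
Undoing the change of variables (Jacobian $|c_0|^d$) and using $|\lambda|\geq 1$, $|\alpha|\leq k$, $|c_0|\leq 1$,
$$
\Bigl|\int_B\psi\bigl(\lambda\,\phi_v(x)\bigr)\,|dx|\Bigr|\leq d_{|\alpha|}\,|c_0|^{d-1}\,|\lambda|^{-1/|\alpha|}\leq D\,|y|^{-1/k}.
$$
Summing the at most $N$ such bounds gives $|I(y)|\leq c\,|y|^{-1/k}$ with $c$ depending only on $K$, $n$, $d$, $k$, and the relevant features of $f$ captured through $\|f-f(0)\|$ and Proposition \ref{p:several}. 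The delicate point throughout is uniformity of the partition as $v$ varies, which is handled by working on the single compact set $S$ and exploiting the discreteness of $|\cdot|$.
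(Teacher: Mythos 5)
Your proof is correct and reaches the conclusion by the same basic route as the paper: factor $y=\lambda v$ with $|v|=1$, reduce to the scalar phase $\phi_v=v\cdot f$, and invoke Proposition~\ref{p:several}. The difference is in how the dependence of the multi-index $\alpha$ on the pair $(v,x)$ is handled. The paper, for each fixed $v$, simply asserts the bound for $\phi_v$ ``by Proposition~\ref{p:several}'' and then uniformizes the constant over $v$ by observing that $\|\phi_v-\phi_v(0)\|$ takes only finitely many values between $1$ and $\|f-f(0)\|$. Read literally, Proposition~\ref{p:several} requires a single $\alpha$ valid for all $x\in\cO_K^d$, while the hypothesis here only produces some $\alpha$ at each $x$; the paper is implicitly relying on the fact that the proof of Proposition~\ref{p:several} covers $\cO_K^d$ by small balls on which a single direction is chosen anyway, so it remains valid if $\alpha$ (and even $|\alpha|\le k$) varies locally. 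Your compactness argument on $S=\{|v|=1\}\times\cO_K^d$—using the discreteness of $|\cdot|$ to get a clopen cover by the sets $U_\alpha$, disjointifying, decomposing into product balls, rescaling via $x=d_0+c_0 z$ and $\tilde g=c_0^{-|\alpha|}\phi_v(d_0+c_0 z)$, applying Proposition~\ref{p:several} with a fixed $\alpha$ on each piece, and finally using $|\lambda|^{-1/|\alpha|}\le|\lambda|^{-1/k}$ for $|\lambda|\ge1$—makes that elided step fully explicit, and moreover controls the number and radii of the pieces uniformly in $v$ (whereas the paper treats the $v$-uniformity separately through the Gauss norm). So you have not found a different theorem, but you have supplied a careful partition argument where the paper compresses; both versions produce a constant depending only on $K$, $n$, $k$, and $\|f-f(0)\|$.
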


\begin{proof}
Write $y$ as $\lambda v$, where $\lambda\in K^\times$ and $v\in K^n$ with $|v|=1$. Then, for each such $v$,   $$
|\ \int_{\cO_K^n} \psi (\lambda v \cdot f(x))|dx| \ | \leq  d_k
| \lambda |^{-\frac{1}{k}}
$$
for some $d_k$ only depending on $K$, $n$, $k$, and on $\| v\cdot f - v\cdot f(0)\| $, by Proposition \ref{p:several}. The Gauss norm $\| v\cdot f - v\cdot f(0)\| $ takes only finitely many values when $v$ varies over all vectors of length $1$, since this Gauss norm varies continuously in $v$ and $v$ runs over a compact. Even more, $\| v\cdot f - v\cdot f(0)\| $ takes only values between $1$ and $\| f-f(0)\| $. Hence we are done.
\end{proof}

\subsection{$K$-analytic manifolds of finite type and singular Fourier transforms}\label{sec:Fourier}

For an open $X\subset K^n$, $n\geq 0$, a function $f:X\to K^m$ is called $K$-analytic if there is an open cover of $X$ such that for each open $U$ in the cover the restriction of $f$ to $U$ is given by $m$ power series which converge on $U$.
Call a subset $M$ of $K^n$ for some $n\geq 0$ a $K$-analytic manifold of dimension $d$ if  there exists an open cover of $M$ such that for each open $U$ in the cover there exists a coordinate projection $p_U:K^n\to K^d$ onto $d$ of the $n$ standard coordinates on $K^n$, such that $p_U$ induces a bijection $U\to U'$ onto an open $U'\subset K^d$, and there exist $K$-analytic functions $f_1,\ldots,f_n:U'\to K$ such that $f=(f_1,\ldots,f_n)$ is the inverse map of $p_U$ on $U$ . Note that for a $K$-analytic manifold $M$, the open cover and the coordinate projections $p_U$ can be taken such that the $p_U$ are isometries. Then the induced volume $\mu_M$ on $M$ is by definition the pull-back of the standard normalized Haar measure on $K^d$ via the isometries $p_U$. By a $K$-analytic manifold we mean a $K$-analytic manifold of some dimension $d$.

\par
Say that a $K$-analytic manifold $M\subset K^n$ is of finite type at $x_0\in M$, if, for each hyperplane $H$ in $K^n$ containing $x_0$, and for each open $U$ in $M$ around $x_0$, one has that $U$ is not contained in $H$.\footnote{This includes the case that $M$ is open in $K^n$.}
 Equivalently, for $f=(f_1,\ldots,f_n)$ analytic coordinates on $M$ mapping $U'$ to $U\subset M$ as in the definition  of $K$-analytic manifolds and such that $x_0\in U$, one says that $M$ is of finite type at $x_0\in M$ if there exists $k>0$ such that for each nonzero $v\in K^n$ there exists $\alpha\in\NN^d$ with $k\geq |\alpha| := \sum_{i=1}^d \alpha_i >0 $ and such that
$$
v \cdot (\partial^\alpha f / \partial x^\alpha )(x_0) \not =0
$$
and the least value for $k$ with this property is called the type of $M$ at $x_0$.
 If $M$ is of finite type at all its points, then we call $M$ of finite type. If there exists $k$ such that $M$ is of type $\leq k$ at each of its points, then the least such integer $k$ is called the type of $M$.

\par
Now let $M\subset K^n$ be a $K$-analytic manifold of finite type, and let $S$ be a compact open in $M$. Then $S$ is a $K$-analytic manifold of type $k$ for some $k>0$.  Let $\mu_S$ be the induced measure on $S$. The Fourier transform of $\mu_S$ is defined for $y\in K^n$ by
$$
\widehat \mu_S (y) := \int_{x\in S} \psi ( y\cdot x  ) d\mu_S(x),
$$
and thus it is a complex valued function on $K^n$.

The following Theorem is the non-archimedean analogue of \cite[Theorem 2, Chapter VIII]{Stein}.

\begin{theorem}\label{thm:Fourier:finitetype} Suppose that the characteristic of $K$ is $>|\alpha|$.
Then
$$
\lim_{|y|\to \infty} \widehat \mu_S (y) =0,
$$
and, more precisely,  there exists a constant $c$ such that
$$
|\widehat \mu_S (y)| \leq c |y|^{\frac{-1}{k}}\ \mbox{ for all nonzero $y\in K^n$.}
$$
\end{theorem}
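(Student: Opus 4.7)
The strategy is to reduce the Fourier transform $\widehat\mu_S(y)$ to a finite sum of integrals to which Proposition \ref{thm:multi/several} applies. Since $S$ is a compact open in the $K$-analytic manifold $M$ and $M$ is of type $k$ at every point of $S$, I would first cover $S$ by finitely many coordinate patches. Each $x_0\in S$ admits a chart $f_{x_0}\colon V_{x_0}\to U_{x_0}\subset S$ with $V_{x_0}\subset K^d$ open and $f_{x_0}$ analytic; after shrinking, $V_{x_0}$ may be taken to be a ball, hence isometric (up to rescaling) to $\cO_K^d$. Compactness of $S$ produces finitely many such patches $U_1,\ldots,U_N$ covering $S$, and passing to a disjoint refinement (possible since we are working with balls) together with the linearity of the integral reduces the estimate to bounding finitely many integrals of the form
\[
J(y)=\int_{\cO_K^d}\psi(y\cdot g(t))\,|dt|
\]
with $g\colon\cO_K^d\to K^n$ analytic.

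Next I would verify the hypothesis of Proposition \ref{thm:multi/several} for $g$, after a further partition of $\cO_K^d$. The finite-type condition at a point $t_0\in\cO_K^d$ says that for every unit $v\in K^n$ there exists $\alpha$ with $0<|\alpha|\le k$ and $v\cdot\partial^\alpha g(t_0)\ne 0$. To upgrade this pointwise statement to a uniform one, I would run two compactness arguments. First, for fixed $t_0$, the map $v\mapsto|v\cdot\partial^\alpha g(t_0)|$ takes values in the discrete set $q_K^{\ZZ}\cup\{0\}$ and is continuous, hence locally constant in $v$; by compactness of the unit sphere $\{v\in K^n:|v|=1\}$ one extracts finitely many multi-indices $\alpha_1,\ldots,\alpha_s$ of size at most $k$ and a bound $\varepsilon_{t_0}>0$ such that for each unit $v$ some $\alpha_j$ satisfies $|v\cdot\partial^{\alpha_j}g(t_0)|\ge\varepsilon_{t_0}$. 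Second, analyticity of $g$ makes each partial derivative continuous in $t$, so the bound persists on a small ball $B(t_0)\subset\cO_K^d$. Compactness of $\cO_K^d$ then gives a finite partition into balls $B_1,\ldots,B_M$ and a common $\varepsilon>0$ for which the uniform finite-type condition holds on each $B_i$ with the fixed list $\alpha_1,\ldots,\alpha_s$.

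On each piece $B_i=t_i+c_i\cO_K^d$, the affine change of variables $t=t_i+c_iz$ turns the integral over $B_i$ into $|c_i|^d\int_{\cO_K^d}\psi(y\cdot g_i(z))\,|dz|$ with $g_i(z):=g(t_i+c_iz)$, and the chain rule gives $|v\cdot\partial^\alpha g_i(z)|\ge|c_i|^{|\alpha|}\varepsilon\ge|c_i|^k\varepsilon$ for the appropriate $\alpha$. After rescaling $g_i$ by a suitable nonzero scalar (which amounts to replacing $y$ by a fixed nonzero multiple), Proposition \ref{thm:multi/several} delivers a bound $\bigl|\int_{\cO_K^d}\psi(y\cdot g_i(z))\,|dz|\bigr|\le c_i|y|^{-1/k}$ with $c_i$ depending only on $K$, $n$, $k$, and $\|g_i-g_i(0)\|$. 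Since there are only finitely many pieces and only finitely many charts, summing yields the desired inequality $|\widehat\mu_S(y)|\le c|y|^{-1/k}$, from which $\widehat\mu_S(y)\to 0$ as $|y|\to\infty$ is immediate.

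The main obstacle is the double compactness argument in the second paragraph: the finite-type condition supplies, a priori, a different $\alpha$ and a different lower bound at each pair $(t_0,v)$, whereas one needs a single finite list $\alpha_1,\ldots,\alpha_s$ and a uniform $\varepsilon$ valid on all of $\cO_K^d\times\{v\in K^n:|v|=1\}$. The total disconnectedness of $K$ helps here, since the norm functions are locally constant in $v$, but one must still interleave the covering arguments over $t$ and over $v$ and then carefully absorb the resulting rescalings into the constants provided by Proposition \ref{thm:multi/several}.
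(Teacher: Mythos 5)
Your proof is correct and follows the same route as the paper: cover $S$ by finitely many analytic charts, translate $\widehat\mu_S$ into a finite sum of oscillatory integrals over $\cO_K^d$, and apply Proposition \ref{thm:multi/several} to each piece. The paper's proof is a single sentence and leaves implicit the compactness argument you spell out in your second paragraph (passing from the pointwise nondegeneracy ``$v\cdot\partial^\alpha f(x_0)\ne 0$'' in the definition of finite type to the uniform lower bound ``$\geq 1$'' that Proposition \ref{thm:multi/several} actually requires, by first compactifying over the unit sphere in $v$ and then over the domain in $t$, followed by the affine rescaling absorbing $|c_i|^k\varepsilon$ into the constant); this is precisely the gap the reader is expected to fill, and your filling of it is sound.
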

\begin{proof}
By using finitely many charts with analytic isometries for the maps $p_U$ as in the definition of $K$-analytic manifolds given above, the theorem is translated into a finite sum of integrals as treated in Proposition \ref{thm:multi/several}.
\end{proof}

\section{Restriction of the Fourier transform}

The above non-archimedean van der Corput Lemma allows us to develop the theory in great analogy to what follows on the real van der Corput Lemma in \cite{Stein} from Chapter VIII on. We will only implement a non-archimedean analogue of the important restriction result by E.~Stein \cite{Stein} and C.~Fefferman \cite{Fefferman71}, namely in the form of Theorem 3 of \cite[Section 4, Chapter VIII]{Stein}, as it is an ingenious and rather recent application of van der Corput's Lemma. In fact, we will stay very close to loc.~cit., sometimes transcribing rather directly from the real case to the non-archimedean case.

To sketch some context we base ourselves on the introduction from \cite[Section 4, Chapter VIII]{Stein}.
The Fourier transform of an $L^1(K^n)$-function is a continuous function, and hence is defined everywhere on $K^n$. On the other hand, the Fourier transform of an $L^2$ function is itself no better than an $L^2$-function, and so can be defined only almost everywhere, and is thus completely arbitrary on a set of measure zero. In addition, when $1<p\leq 2$, the classical Hausdorff-Young theorem allows one to realize the Fourier transform of an $L^p$ function as an element of $L^q(K^n)$, $1/p+1/q=1$, and so, at first sight, is determined only almost everywhere.
 In view of this, it is a remarkable discovery by E.~Stein and C.~Fefferman in the real case and adapted here to the non-archimedean case, that when $n\geq 2$ and $M$ is a submanifold of $K^n$ that has appropriate curvature, there is a $p_0=p_0(M)$, with $1<p_0<2$, so that every function in $L^p(K^n)$, for any $p$ with $1\leq p<p_0$, has a Fourier transform that has a well-defined restriction to $M$.

\subsection{}

Let us make the notion of restriction of the Fourier transform precise.

Suppose that $M\subset K^n$ is a $K$-analytic manifold with induced measure $\mu_M$. Say that the $L^p$ restriction property is valid for $M$ if there exists a $q=q(p)$ so that the inequality
\begin{equation}\label{eq:restr}
\big( \int_{M_0} |\widehat f (\xi)|^q \mu_M(\xi)   \big)^{1/q} \leq A_{p,q}(M_0) \cdot  \| f\| _{L^p}
\end{equation}
holds for each Schwartz-Bruhat function $f$ on $K^n$ with Fourier transform $\widehat f$, whenever $M_0$ is a compact open subset of $M$.
Because the space of Schwartz-Bruhat functions is dense in $L^p$ we can, when (\ref{eq:restr}) holds, define $\widehat f$ on $M$ (almost everywhere for $\mu_M$), for each $f$ in $L^p(K^n)$.

\begin{theorem}\label{thm:restr}
Let $M\subset K^n$ be a $K$-analytic manifold of type $k$, and suppose that the characteristic of $K$ is either $0$ or $>k$. Then there exists a $p_0$ depending on $M$ and with $p_0>1$, such that $M$ has the $L^p$ restriction property (\ref{eq:restr}) for all $p$ with $1\leq p\leq p_0$ and $q=2$.
\end{theorem}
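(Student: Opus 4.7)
The plan is to transcribe the Stein--Tomas restriction argument (Theorem~3 of \cite[Chapter~VIII, \S 4]{Stein}) to the non-archimedean setting, using Theorem~\ref{thm:Fourier:finitetype} as the sole oscillatory input. First I reduce by duality and the $TT^*$ trick: writing $R\colon f\mapsto \widehat f\,|_{M_0}$ for the restriction operator, its adjoint is $R^*\colon g\mapsto \widehat{g\,\mu_{M_0}}$ and $R^*Rf=f*\widehat{\mu_{M_0}}$, so the inequality~(\ref{eq:restr}) with $q=2$ is equivalent to boundedness of the convolution operator
$$
Tf:=f*\widehat{\mu_{M_0}}
$$
from $L^p(K^n)$ to $L^{p'}(K^n)$, with $1/p+1/p'=1$. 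It thus suffices to produce some $p_0>1$ for which $T$ is bounded $L^p\to L^{p'}$ for all $1\le p\le p_0$.

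Next I decompose dyadically $\widehat{\mu_{M_0}}=\sum_{j\ge 0}K_j$, where $K_0$ is the restriction of $\widehat{\mu_{M_0}}$ to $\{|x|\le 1\}\subset K^n$ and, for $j\ge 1$, $K_j$ is its restriction to the annulus $\{|x|=q_K^j\}$. Theorem~\ref{thm:Fourier:finitetype} immediately yields $\|K_j\|_\infty\le c\,q_K^{-j/k}$, and hence the endpoint
$$
\|f*K_j\|_\infty\le c\,q_K^{-j/k}\|f\|_1.
$$
For the Plancherel endpoint one notes that $\widehat{K_j}=\mu_{M_0}*\widehat{\chi_j}$ (with $\chi_j$ the indicator of the annulus), that $\widehat{\chi_j}$ is supported on a ball of radius $q_K^{-j}$ and has $L^\infty$-norm at most $q_K^{jn}$, and that the isometric local charts in the definition of $K$-analytic manifolds (\S\ref{sec:Fourier}) force the $\mu_{M_0}$-mass of any ball of radius $q_K^{-j}$ in $K^n$ to be at most $C\,q_K^{-jd}$ with $d=\dim M$. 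This yields $\|\widehat{K_j}\|_\infty\le C\,q_K^{j(n-d)}$, and hence $\|f*K_j\|_2\le C\,q_K^{j(n-d)}\|f\|_2$.

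Riesz--Thorin interpolation between these two endpoints then produces, for each $p\in[1,2]$,
$$
\|f*K_j\|_{p'}\le C\,q_K^{j\alpha(p)}\|f\|_p,
$$
with $\alpha(p)$ the appropriate convex combination of $-1/k$ and $n-d$. The geometric sum $\sum_j q_K^{j\alpha(p)}$ converges precisely when $\alpha(p)<0$, a condition holding on a half-open interval $[1,p_0)$ with $p_0>1$ (since $\alpha(1)=-1/k<0$). Summing in $j$ then controls the convolution operator $T$ on the corresponding $L^p\to L^{p'}$ pair, which by the first step is the desired restriction inequality.

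The main obstacle I expect is the geometric bound $\|\widehat{K_j}\|_\infty\le C\,q_K^{j(n-d)}$, which is not oscillatory but expresses the transverse density of $M_0$ inside $K^n$; it has to be extracted carefully from the isometric parametrizations available in the definition of $K$-analytic manifolds, and it is the only place where something beyond Theorem~\ref{thm:Fourier:finitetype} and soft functional analysis enters. Once that bound is in hand, the rest of the argument is the summation--interpolation scheme of the real Stein--Tomas proof and transfers verbatim, with $p_0$ depending only on $k$, $n$, $d$, and $q_K$.
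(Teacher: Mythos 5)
Your argument is correct, but it takes a genuinely different route from the paper's. The paper follows the softer branch of Stein's Chapter VIII, \S 4: after the same $TT^*$ reduction, it bounds $|L(x)|=|\widehat\mu_{M_0}(-x)|\le A'|x|^{-\gamma}$ for any $0\le\gamma\le 1/k$ (using only Theorem~\ref{thm:Fourier:finitetype} and boundedness of $L$), and then applies the non-archimedean Hardy--Littlewood--Sobolev fractional integration inequality~(\ref{eq:HLS}), which it proves via Hedberg's maximal-function argument. This needs no information about $\mu_{M_0}$ beyond the oscillatory decay, and yields the closed range $1\le p\le 2nk/(2nk-1)$.

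You instead run the Stein--Tomas machine: dyadic decomposition of $\widehat\mu_{M_0}$ on annuli, $L^1\to L^\infty$ from the decay estimate, $L^2\to L^2$ from Plancherel plus the density bound $\mu_{M_0}(B(x,r))\le Cr^d$ coming from the isometric charts, then Riesz--Thorin and a geometric sum. This requires the extra non-oscillatory geometric input (the ball-mass bound), which the paper never touches, but in return it gives a strictly larger range of admissible $p$ whenever $kd>1$: your critical value $(2+2k(n-d))/(1+2k(n-d))$ beats the paper's $2nk/(2nk-1)$ as soon as $k\ge 2$ or $n-d<n-1$ (and matches it for a type-$1$ hypersurface in $K^2$). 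One small caveat: your geometric series is finite only for $\alpha(p)<0$ strictly, so you get a half-open interval of $p$'s and must shrink to a closed $[1,p_0]$ before invoking the statement; and the constant $p_0$ you produce depends only on $k$, $n$, $d$, not on $q_K$. Neither point is an obstacle, since the theorem only claims existence of some $p_0>1$. Your worry about the density bound is well-placed but not a problem here: for an isometric chart $p_U\colon U\to U'\subset K^d$, the set $p_U(B\cap U)$ has diameter $\le r$ hence measure $\le r^d$, and $M_0$ is covered by finitely many such charts; this gives the bound for all radii $r$, not just small ones.
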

\begin{remark}
The analysis being similar to the analysis of \cite[Section 4, Chapter VIII]{Stein}, we will get the same value
$$
p_0 = \frac{2nk}{2nk - 1 }
$$
as in Theorem 3 of \cite[Section 4, Chapter VIII]{Stein}.
\end{remark}

\begin{proof}
It will suffice to prove that, for compact open $M_0\subset M$,
$$
\big( \int_{M_0} |\widehat f (\xi)|^2 \mu_M(\xi)   \big)^{1/2} \leq A  \cdot  \| f\| _{L^p(K^n)}
$$
for any Schwartz-Bruhat function $f$ on $K^n$.
Define $\mu$ as $\chi_{M_0} \mu_M$ with $\chi_{M_0}$ the characteristic function of $M_0$. Consider the operator $R$ on Schwartz-Bruhat functions, where $R f (\xi)$ is defined for $\xi\in M$ by the Fourier transform
$$
Rf(\xi) = \int_{K^n} \psi ( x\cdot \xi) f(x) |dx|
$$
The question then is whether $R$ can be seen as a bounded mapping from $L^p(K^n)$ to $L^2(M,\mu)$, and, in studying this, we consider also its formal adjoint $R^*$, given for $f$ and $x\in K^n$ by
$$
R^* f (x) = \int_{M} \psi ( - x\cdot \xi) f(\xi) \mu(\xi).
$$
We have
$$
\langle R f , Rf  \rangle_{L^2(M,\mu)} = \langle R^* R f , f \rangle_{L^2(K^n)},
$$
so to prove that
$$
R:L^p(K^n)\to L^2(M,\mu)
$$
is bounded, it suffices, by H\"older's inequality, to see that
$$
R^*R : L^p(K^n)\to L^{p'}(K^n)
$$
is bounded, where $p'$ is the exponent conjugate to $p$. One sees that
$$
(R^*R f )(x) = \int_{K^n}\int_M \psi ( \xi\cdot (y- x)   ) \mu(\xi) f(y) |dy|,
$$
so $(R^*R f )(x) = (f \ast L ) (x)$ with
$$
L(x) = \widehat \mu ( - x),
$$
where $\widehat \mu$ is as in section \ref{sec:Fourier}.
By Theorem \ref{thm:Fourier:finitetype}, we have   for all nonzero $x\in K^n$
$$
| L(x) | \leq A |x|^{\frac{-1}{k}}
$$
 for some $A$ and clearly $L$ is bounded, thus there exists a constant $A'$ such that for nonzero $x$
$$
| L(x) | \leq A' |x|^{-\gamma}, \quad \mbox{whenever } 0\leq \gamma\leq 1/k.
$$
By the theorem of fractional integration (see the Hardy-Littlewood-Sobolev inequality below), the operator
 $
f\mapsto f\ast (|x|^{-\gamma}  )
 $
is bounded from $L^p(K^n)$ to $L^q(K^n)$, whenever $1<p<q<\infty$ and $1/q = 1/p -1 + \gamma/n$. Then if $q=p'$, we have $1/q = 1 - 1/p$, so the relation among the exponents becomes $2 - 2/p=\gamma / n$, and the restriction $0\leq \gamma\leq 1/k$ becomes $1\leq p\leq 2nk/(2nk - 1)$, completing the proof of the theorem (since the case $p=1$ is trivial).
\end{proof}

\subsubsection{Hardy-Littlewood-Sobolev inequality}

In the real set-up, there are many proofs for the Hardy-Littlewood-Sobolev inequality, which in the non-archimedean case reads as the inequality
\begin{equation}\label{eq:HLS}
\|f\ast (|y|^{-\gamma}  ) \|_{L^q(K^n)} \leq A_{p,q}  \|f  \|_{L^p(K^n)}
\end{equation}
for
\begin{equation}\label{pqn}
0<\gamma<n,\ 1<p<q<\infty,\ \mbox{and } \frac{1}{q} = \frac{1}{p} - \frac{n-\gamma}{n},
\end{equation}
where we have written $|y|^{-\gamma}$ for the function $y\mapsto |y|^{-\gamma}$ on $K^n\setminus\{0\}$, extended trivially on $0$.
We will work out the non-archimedean version of the proof given in \cite[Section 4.2, Chapter VIII]{Stein}, which is based on Hedberg's proof in \cite{Hedberg72}.
First define, for any complex valued function $f$ on $K^n$,  the maximal function
$$
(M f) (x) := \sup_{x\in B} \frac{1}{\rm Vol (B)} \int_B |f(y)| |dy|,
$$
where the supremum is taken over all balls in $K^n$ containing $x$, ${\rm Vol (B)}$ denotes the volume of $B$, and where a ball is any subset of $K^n$ of the form
$$
a+b\cdot \cO_K^n
$$
for $a\in K^n$ and nonzero $b$ in $K$. (Note that   the distinction made in the real case between the maximal function and the so-called sharp maximal function, cf.~$M$ and $\tilde M$ on page 13 of \cite{Stein}, is irrelevant here since they coincide in the non-archimedean case.) Recall the Hardy-Littlewood-Wiener maximal theorem, with literally the same proof as in the archimedean case of  \cite[Theorem 1, Chapter 1]{Stein}, namely that for $f\in L^p(K^n)$, $1<p\leq \infty$, one has $M(f)\in L^p(K^n)$ and
\begin{equation}\label{maximalfunction}
\|M(f) \|_{L^p(K^n)} \leq A_p \| f \|_{L^p(K^n)}
\end{equation}
where the constant $A_p$ only depends on $K$, $n$, and $p$.
Now, write
$$
(f\ast (|y|^{-\gamma} ) (x) = \int_{K^n} f(x-y) |y|^{-\gamma} |dy| = \int_{|y|<R} + \int_{|y|\geq R}
$$
for $R>0$.

Note that for a characteristic function $\chi_B$ of a ball $B$ containing zero, by the definition of $M$, one has
$$
| (f\ast \chi_B)(x) | \leq (Mf)(x) \cdot {\rm Vol (B)}.
$$
Since one can approximate the function $y\mapsto |y|^{-\gamma}\chi_{B_R}(y)$ on $K^n$, with $B_R$ the ball given by $|y|<R$, by finite expressions of the form
$$
\sum_{i=1}^N a_i\chi_{B_i},
$$
with the $B_i$ balls around $0$,
it follows by the Lebesgue monotone convergence theorem that our first integral, which can be rewritten as the convolution
$$
  \int_{K^n} f(x-y) \big(|y|^{-\gamma}\chi_{B_R}(y)\big) |dy|,
$$
is bounded in norm by
$$
(Mf)(x)\cdot \int_{|y|<R} |y|^{-\gamma} |dy| \leq  \delta R^{n-\gamma}\cdot (Mf)(x),
$$
for some constant $\delta$.
By H\"older's inequality, the second integral
$$
\int_{|y|\geq R} f(x-y) |y|^{-\gamma} |dy|
$$
is dominated by
$$
\|f \|_{L^p(K^n)} \cdot \| |y|^{-\gamma} \cdot(1 - \chi_{B_R} )   \|_{L^{p'}(K^n)}.
$$
Now $|y|^{-\gamma} \cdot(1 - \chi_{B_R} )$ is in $L^{p'}(K^n)$ when
$-\gamma p'<-n $ and, in view of (\ref{pqn}),
$$
\gamma p' - n = \frac{np'}{q}>0.
$$
Thus
$$
\| |y|^{-\gamma} \cdot(1 - \chi_{B_R} )   \|_{L^{p'}(K^n)} \leq e R^{-n/q}
$$
for some constant $e$.
Summing the two integrals, we have
$$
|(f\ast (|y|^{-\gamma} ) (x)| \leq A\big( (Mf(x)R^{n-\gamma} +    \|f \|_{L^p(K^n)}   R^{-n/q}  \big)
$$
for some constant $A$.
Choose $R$ so that both terms on the right side are equal, that is,
$$
\frac{(Mf)(x)}{\|f \|_{L^p(K^n)}} = R^{-n+\gamma-n/q} = R^{-n/p}.
$$
Substituting this in the above gives
$$
|(f\ast (|y|^{-\gamma} ) (x)| \leq 2\cdot A \cdot (Mf(x))^{p/q} \cdot    \|f \|_{L^p(K^n)}^{1-p/q}.
$$
The inequality (\ref{eq:HLS}) then follows from the usual $L^p$ inequality for the maximal operator $M$, namely the inequality (\ref{maximalfunction}) of the Hardy-Littlewood-Wiener maximal theorem.


\bibliographystyle{amsplain}

\bibliography{anbib}

\end{document}